\newtheorem{theorem}{Theorem}
\newtheorem{acknowledgement}[theorem]{Acknowledgement}
\newtheorem{corollary}[theorem]{Corollary}
\newtheorem{definition}[theorem]{Definition}
\newtheorem{notation}[theorem]{Notation}
\newtheorem{proposition}[theorem]{Proposition}
\newtheorem{remark}[theorem]{Remark}
\newenvironment{proof}[1][Proof]{\noindent\textbf{#1.} }{\ \rule{0.5em}{0.5em}}
\begin{document}

\title{Discrete conformal variations and scalar curvature on piecewise flat two and
three dimensional manifolds}
\author{David Glickenstein\thanks{Partially supported by NSF grant DMS 0748283}\\University of Arizona}
\maketitle

\section{Introduction}

Consider a manifold constructed by identifying the boundaries of Euclidean
triangles or Euclidean tetrahedra. When these form a closed topological
manifold, we call such spaces piecewise flat manifolds (see Definition
\ref{definition piecewise flat}) as in \cite{CMS}. Such spaces may be
considered discrete analogues of Riemannian manifolds, in that their geometry
can be described locally by a finite number of parameters, and the study of
curvature on such spaces goes back at least to Regge \cite{Reg}. In this
paper, we give a definition of conformal variation of piecewise flat manifolds
in order to study the curvature of such spaces.

Conformal variations of Riemannian manifolds have been well studied. While the
most general variation formulas for curvature quantities is often complicated,
the same formulas under conformal variations often take a simpler form. For
this reason, it has even been suggested that an approach to finding Einstein
manifolds would be to first optimize within a conformal class, finding a
minimum of the Einstein-Hilbert functional within that conformal class, and
then maximize across conformal classes to find a critical point of the
functional in general \cite{And}. Finding critical points of the
Einstein-Hilbert functional within a conformal class is a well-studied problem
dating back to Yamabe \cite{Yam}, and the proof that there exists a constant
scalar curvature metric in every conformal class was completed by Trudinger
\cite{Tru}, Aubin \cite{Aub}, and Schoen \cite{Sch} (see also \cite{LP} for an
overview of the Yamabe problem).

Implicitly, there has been much work on conformal parametrization of
two-dimensional piecewise flat manifolds, many of which start with a circle
packing on a region in $\mathbb{R}^{2}$ or a generalized circle packing on a
manifold. Thurston found a variational proof of Andreev's theorem
(\cite{Thurs} \cite{MR}) and conjectured that the Riemann mapping theorem
could be approximated by circle packing maps, which was soon proven to be true
\cite{RS} (see also \cite{Ste} for an overview of the theory). Another
direction for conformal parametrization appears in \cite{RW}, \cite{Luo1}, and
\cite{SSP}. These and other works produce a rich theory of conformal
geometries on surfaces and have led to many beautiful results about circle
packings and their generalizations. The theory developed in this paper unifies
several of these seemingly different notions of conformality to a more general
notion. It also allows an explicit computation of variations of angles which
allows one to glean geometric information. The geometric interpretation of the
variations of angles was known in some instances (e.g., \cite{He} \cite{G1}),
but the proofs were explicit computations, which made them difficult to extend
to more general cases. One of the main contributions of this paper is to show
how these computations may be done in a more simple, geometric way which
easily generalizes.

Existing literature on conformal parametrization of three-dimensional
piecewise flat manifolds is much more sparse. A notion was given by Cooper and
Rivin \cite{CR} which takes a sphere packing approach, and a rigidity result
was produced (see also \cite{Riv2} and \cite{G2}). However, this theory
requires that edge lengths come from a sphere packing, which is a major
restriction of the geometry even on a single tetrahedron. In \cite{G1}, the
author was able to show by explicit computation that the variations of angles
are related to certain areas and lengths of the piecewise flat manifold (in
actuality, one needs the additional structure of a metric as described below).
The theory developed in this paper generalizes this result to a general class
of three dimensional piecewise flat manifolds. This generalization allows a
geometric understanding of the variation of angles in a three-dimensional
piecewise flat manifold under conformal variations, and the space of conformal
variations is quite large and need not depend on the initial distribution of
the edge lengths (unlike \cite{G1}, where one must assume that the metric
comes form a sphere packing structure).

The variation formulas for the curvature allow one to introduce a theory of
functionals closely related to Riemannian functionals such as the
Einstein-Hilbert functional. In two dimensions, many of these functionals are
well studied, originally dating back to the work of Colin de Verdi\`{e}re
\cite{CdV}. In dimensions greater than three, the generalization of the
Einstein-Hilbert functional was suggested by Regge \cite{Reg} and has been
well studied both in the physics and mathematics communities (see \cite{Ham}
for an overview). Recently, the functional was used to provide a constructive
proof of Alexandrov's theorem that a surface with positive curvature is the
boundary of a polytope \cite{BI}. In this paper, we give a general
construction for two-dimensional functionals arising from a conformal
structure. We also consider variations of the Einstein-Hilbert-Regge
functional with respect to conformal variations. Variation of this functional
gives rise to notions of Ricci flat, Einstein, scalar zero, and constant
scalar curvature metrics on piecewise flat manifolds. Our structure allows one
to consider second variations of these functionals around fixed points, and
give rigidity conditions near a Ricci flat or scalar zero manifold. An
eventual goal is to prove theorems about the space of piecewise flat manifolds
analogous to ones on Riemannian manifolds, for instance \cite{Koi} \cite{Obat}.

Certain curvatures considered here have been shown to converge in measure to
scalar curvature measure by Cheeger-M\"{u}ller-Schrader in \cite{CMS}. The
proof in the general case does not appear to give the best convergence rate,
and it is an open problem what this best convergence rate may be. It would be
desirable to have a more precise control of the convergence and to prove a
convergence of Ricci curvatures or of Einstein manifolds on piecewise flat
spaces to Riemannian Einstein manifolds. Although the convergence result shows
convergence to scalar curvature measure, it has been suggested that these
curvatures are analogous to the curvature operator on a Riemannian manifold
\cite{Che}.

This paper is organized as follows. Section
\ref{section geometric structures and curvature} gives definitions of
geometric structures on piecewise flat manifolds in analogy to Riemannian
manifolds and shows the main theorems on variations of curvature functionals.
Section \ref{section angle variations} derives formulas for conformal
variations of angles. Section \ref{section curvature variations} translates
these results to variations of curvatures and curvature functionals. Section
\ref{section special cases} discusses some of the conformal structures already
studied and shows how they fit into the framework developed here. Finally,
Section \ref{section laplace} discusses discrete Laplacians, when they are
negative semidefinite operators, and how this implies convexity results for
curvature functionals and rigidity of certain metrics. The main theorems in
the paper are Theorems \ref{2D} and \ref{3D} on the variations of angles,
which could easily be applied to extend these results to the case of manifolds
with boundary, Theorems \ref{curv derivative 2D} and
\ref{theorem 3d curvature variation} on the variation of curvature, which give
analogues of the variation (\ref{scalar curvature var smooth}) of scalar
curvature under conformal deformation of a Riemannian metric, Theorems
\ref{2d variational} and \ref{theorem conformal 3d}\ on the variation of
curvature functionals, Theorems \ref{theorem 2d convexity} and
\ref{theorem 3d convexity} on convexity of curvature functionals, and Theorems
\ref{theorem 2d rigidity} and \ref{theorem 3d rigidity}\ on rigidity of zero
scalar curvature and Ricci flat manifolds.

\section{Geometric structures and
curvature\label{section geometric structures and curvature}}

\subsection{Metric structure}

We will consider certain analogues of Riemannian geometry. A Riemannian
manifold $\left(  M^{n},g\right)  $ is a smooth manifold $M$ together with a
symmetric, positive definite 2-tensor $g.$ A piecewise flat manifold is
defined similarly to the definitions in \cite{CMS}.

\begin{definition}
\label{definition piecewise flat}A triangulated manifold $\left(  M,T\right)
$ is a topological manifold $M$ together with a triangulation $T$ of $M.$ A
(triangulated) \emph{piecewise flat manifold} $\left(  M,T,\ell\right)  $ is a
triangulated manifold $\left(  M,T\right)  $ together with a function $\ell$
on the edges of the triangulation such that each simplex can be embedded in
Euclidean space as a (nondegenerate) Euclidean simplex with edge lengths
determined by $\ell$.
\end{definition}

Nondegeneracy can be expressed by the fact that all simplices have positive
volume. This condition can be realized as a function of the edge lengths using
the Cayley-Menger determinant formula for volumes of Euclidean simplices.

In this paper we will consider only closed, triangulated manifolds although
the definitions could be extended to more general spaces. We will describe
simplices as $\left\{  i,j,\ldots,k\right\}  ,$ where $i,j,k$ are natural
numbers. The length associated to an edge $\left\{  i,j\right\}  $ will be
denoted $\ell_{ij},$ area associated to $\left\{  i,j,k\right\}  $ will be
denoted $A_{ijk},$ and volume associated to $\left\{  i,j,k,\ell\right\}  $
will be denoted by $V_{ijk\ell}.$ Note that once lengths are assigned, area
and volume can be computed using, for instance, the Cayley-Menger determinant
formula. We will also use the notation $\gamma_{i,jk}$ to denote the angle at
vertex $i$ in triangle $\left\{  i,j,k\right\}  ,$ and sometimes drop $jk$
when it is clear which triangle we are considering. A dihedral angle at edge
$\left\{  i,j\right\}  $ in $\left\{  i,j,k,\ell\right\}  $ will be denoted
$\beta_{ij,k\ell}$ and $k\ell$ will be dropped when it is clear which
tetrahedron we are considering. In all of the following cases, the indices
after the comma will be dropped when the context is clear.

\begin{definition}
Let $V\left(  T\right)  $ denote the vertices of $T,$ let $E\left(  T\right)
$ denote the edges of $T,$ and let $E_{+}\left(  T\right)  $ denote the
directed edges in $T$ (there are two directed edges $\left(  i,j\right)  $ and
$\left(  j,i\right)  $ associated to each edge $\left\{  i,j\right\}  $). For
any of these vector spaces $X,$ let $X^{\ast}$ space of functions
$h:X\rightarrow\mathbb{R}$.
\end{definition}

Note that, for instance, if $d\in E_{+}\left(  T\right)  ^{\ast}$ then
$d=\sum_{\left(  i,j\right)  \in E_{+}}d\left(  i,j\right)  \phi_{ij}$, where
$\phi_{ij}$ is the standard basis of $E_{+}\left(  V\right)  ^{\ast}.$ We will
use $d_{ij}$ (as in Definition \ref{definition metric}) to denote either
$d\left(  i,j\right)  $ or the function $d\left(  i,j\right)  \phi_{ij},$ and
similarly with elements of $V\left(  T\right)  ^{\ast}$ and $E\left(
T\right)  ^{\ast}.$

\begin{remark}
We are implicitly assuming that the list of vertices determines the simplex
uniquely. This is just to make the notation more transparent. We could also
have indexed by simplices, such as $\ell_{\sigma^{2}},A_{\sigma^{3}}%
,\gamma_{\sigma^{0}\subset\sigma^{3}},$ etc. This latter notation is much
better if one wants to allow multiple simplices which share the same vertices.
\end{remark}

\begin{remark}
A piecewise flat manifold is a geometric manifold, in the sense that it can be
given a distance function in much the same way that a Riemannian manifold is
given a distance function, i.e., by minimizing over lengths of curves.
\end{remark}

The definition ensures that each simplex can be embedded isometrically in
Euclidean space. The image of vertex $i$ in Euclidean space will be denoted
$v_{i},$ the image of edge $\left\{  i,j\right\}  $ will be denoted
$v_{i}v_{j},$ etc.

Piecewise flat manifolds are not exactly the analogue of a Riemannian manifold
we will consider.

\begin{definition}
\label{definition metric}Let $\left(  M,T\right)  $ be a triangulated
manifold. A \emph{piecewise flat pre-metric} is an element $d\in E_{+}\left(
T\right)  ^{\ast}$ such that $\left(  M,T,\ell\right)  $ is a piecewise flat
manifold for the assignment $\ell_{ij}=d_{ij}+d_{ji}$ for every edge $\left\{
i,j\right\}  .$ A piecewise flat pre-metric $d$ is a \emph{metric} if for
every triangle $\left\{  i,j,k\right\}  $ in $T,$
\begin{equation}
d_{ij}^{2}+d_{jk}^{2}+d_{ki}^{2}=d_{ji}^{2}+d_{ik}^{2}+d_{kj}^{2}%
.\label{d-condition}%
\end{equation}
A \emph{piecewise flat, metrized manifold} $\left(  M,T,d\right)  $ is a
triangulated manifold $\left(  M,T\right)  $ with metric $d.$
\end{definition}

For future use, we define the space of piecewise flat metrics on $\left(
M,T\right)  .$

\begin{definition}
\label{definition met}Define the space $\mathfrak{met}\left(  M,T\right)  $ to
be
\[
\mathfrak{met}\left(  M,T\right)  \mathfrak{=}\left\{  d\in E_{+}\left(
T\right)  ^{\ast}:\left(  M,T,d\right)  \text{ is a piecewise flat, metrized
manifold}\right\}  .
\]

\end{definition}

As shown in \cite{G3}, condition (\ref{d-condition}) ensures that every
simplex has a geometric center and a geometric dual which intersects the
simplex orthogonally at the center. This dual is constructed from centers.
Given a simplex embedded into space as $\left\{  v_{1},v_{2},\ldots
,v_{n}\right\}  ,$ we have a center point to the simplex given by
$c_{123\cdots n}.$ This point can be projected onto the $\left(  n-1\right)
$-dimensional simplices and successively projected onto all simplices, giving
centers $c_{ij\cdots k}$ for all subsets of $\left\{  1,2,\ldots,n\right\}  .$
The centers can be constructed inductively by starting with centers of edges
at a point $c_{ij}$ which is a (signed) distance $d_{ij}$ from vertex $i$ and
$d_{ji}$ from vertex $j.$ Then one considers orthogonal lines through the
centers, and condition (\ref{d-condition}) ensures that in each triangle,
there is a single point where these three lines intersect, giving a center for
the triangle. The construction may be continued for all dimensions, as
described in \cite{G3}. 

For simplicity, let's restrict to $n\leq4.$ We will denote the signed distance
between $c_{1234}$ and $c_{ijk}$ by $h_{ijk,\ell}$ and the signed distance
between $c_{ijk}$ and $c_{ij}$ by $h_{ij,k}.$ The sign is gotten by the
following convention. If $c_{1234}$ is on the same side of the plane defined
by $v_{i}v_{j}v_{k}$ as the tetrahedron $v_{1}v_{2}v_{3}v_{4},$ then
$h_{ijk,\ell}$ is positive, otherwise it is negative (or zero if the point is
on that plane). Similarly, if $c_{ijk}$ is on the same side of the line
defined by $v_{i}v_{j}$ as $v_{i}v_{j}v_{k}$ within that plane, then
$h_{ij,k}$ is positive. Since it is clear that $h_{i,j}=d_{ij},$ we will not
use the former. The side $v_{i}v_{j}$ is divided into a segment containing
$v_{i}$ of length $d_{ij}$ and a segment containing $v_{j}$ of length $d_{ji}$
such that $\ell_{ij}=d_{ij}+d_{ji}.$ It is easy to deduce that $h_{ij,k}$ and
$h_{ijk,\ell}$ can be computed by
\[
h_{ij,k}=\frac{d_{ik}-d_{ij}\cos\gamma_{i,jk}}{\sin\gamma_{i,jk}}%
\]
and
\[
h_{ijk,\ell}=\frac{h_{ij,\ell}-h_{ij,k}\cos\beta_{ij,k\ell}}{\sin
\beta_{ij,k\ell}}.
\]
See \cite{G3} or \cite{BI} for a proof. Importantly, these quantities work for
negative values of the $d$'s and $h$'s. We will also consider the dual area
$A_{ij,k\ell}$ of the edge $\left\{  i,j\right\}  $ in tetrahedron $\left\{
i,j,k,\ell\right\}  ,$ which is the signed area of the planar quadrilateral
$c_{1234}c_{ijk}c_{ij}c_{ij\ell},$ where $i,j,k,\ell$ are distinct. The area
is equal to
\[
A_{ij,k\ell}=\frac{1}{2}\left(  h_{ij,k}h_{ijk,\ell}+h_{ij,\ell}h_{ij\ell
,k}\right)  .
\]

These definitions of centers within a simplex induce a definition of geometric
duals on a triangulation (see \cite{G3} for details). In particular, we will
need the lengths or areas of duals of edges, defined in two and three
dimensions as follows.

\begin{definition}
\label{definition 2d dual length}Let $\left(  M^{2},T,d\right)  $ be a
piecewise flat, metrized manifold of dimension $2.$ Then edge $\left\{
i,j\right\}  $ is the boundary of two triangles, say $\left\{  i,j,k\right\}
$ and $\left\{  i,j,\ell\right\}  .$ The dual length $\ell_{ij}^{\ast}$ is
defined as
\[
\ell_{ij}^{\ast}=h_{ij,k}+h_{ij,\ell}.
\]

\end{definition}

Note that the two triangles can be embedded in the Euclidean plane together,
and $\ell_{ij}^{\ast}$ is the signed distance between the centers of the two triangles.

\begin{definition}
\label{definition 3d dual length}Let $\left(  M^{3},T,d\right)  $ be a
piecewise flat, metrized manifold of dimension $3.$ Then the dual length
$\ell_{ij}^{\ast}$ (which is technically an area) is defined as
\begin{align*}
\ell_{ij}^{\ast} &  =\sum_{k,\ell}A_{ij,k\ell}\\
&  =\sum_{k,\ell}\frac{1}{2}\left(  h_{ij,k}h_{ijk,\ell}+h_{ij,\ell}%
h_{ij\ell,k}\right)  ,
\end{align*}
where the sum is over all tetrahedra containing the edge $\left\{
i,j\right\}  .$
\end{definition}

\begin{notation}
Most sums in this paper will be with respect to simplices, so a sum such as
the one in Definition \ref{definition 3d dual length} means the sum over all
tetrahedra $\left\{  i,j,k,\ell\right\}  $ containing the edge $\left\{
i,j\right\}  ,$ not the sum over all values of $k$ and $\ell$ (which would
give twice the aforementioned sum). 
\end{notation}

The dual length is the area of a (generalized) polygon which intersects the
edges orthogonally at their centers.

\begin{remark}
We specifically did not use the word \emph{Riemannian }because it is not
entirely clear what Riemannian should mean. Natural guesses would be that
$d_{ij}>0$ for all directed edges $\left(  i,j\right)  $ or that all dual
volumes are positive. However, we chose not to make such a distinction in this paper.
\end{remark}

\subsection{Curvature}

In this section we define curvatures of piecewise flat metrized manifolds,
many of which are the same as those for piecewise flat manifolds described by
Regge \cite{Reg} and Cheeger-M\"{u}ller-Schrader \cite{CMS}. Generally,
curvature on a piecewise flat manifold of dimension $n$ is considered to be
concentrated on codimension $2$ simplices, and the curvature at $\sigma$ is
equal to the dihedral angle deficit from $2\pi$ multiplied by the volume of
$\sigma,$ possibly with a normalization. Cheeger-M\"{u}ller-Schrader
\cite{CMS} show that, under appropriate convergence of the triangulations,
such a curvature converges in measure to scalar curvature measure $RdV.$ (In
fact, Cheeger-M\"{u}ller-Schrader prove a much more general result for all
Lipschitz-Killing curvatures, but we will only consider scalar curvature.) We
first define curvature for piecewise flat manifolds in dimension $2,$ which is
concentrated at vertices.

\begin{definition}
\label{definition 2d curvature}Let $\left(  M,T,\ell\right)  $ be a
two-dimensional piecewise flat manifold. Then the curvature $K_{i}$ at a
vertex $i$ is equal to
\[
K_{i}=2\pi-\sum_{j,k}\gamma_{i,jk},
\]
where $\gamma_{i}$ are the interior angles of the triangles at vertex $i.$
\end{definition}

Angles can be calculated from edge lengths using the law of cosines. Note that
in two dimensions, curvature satisfies a discrete Gauss-Bonnet equation,
\[
\sum_{i}K_{i}=2\pi\chi\left(  M\right)  ,
\]
where $\chi$ is the Euler characteristic.

In dimension $3,$ the curvature is concentrated at edges.

\begin{definition}
\label{definition 3d edge curvature}Let $\left(  M,T,\ell\right)  $ be a
three-dimensional piecewise flat manifold. Then the edge curvature $K_{ij}$ is%
\[
K_{ij}=\left(  2\pi-\sum_{k,\ell}\beta_{ij,k\ell}\right)  \ell_{ij}.
\]

\end{definition}

The dihedral angles can be computed as a function of edge lengths using the
Euclidean cosine law to get the face angles, and then using the spherical
cosine law to related the face angles to a dihedral angle. 

There is an interpretation of $K_{ij}/\ell_{ij}$ in terms of deficits of
parallel translations around the \textquotedblleft bone\textquotedblright%
\ $\left\{  i,j\right\}  $. (See \cite{Reg} for details.) For this reason, one
may think of $K_{ij}/\ell_{ij}$ as some sort of analogue of sectional
curvature or curvature operator (see \cite{Che}).

The fact that curvature is concentrated at edges often makes it difficult to
compare curvatures with functions, which are naturally defined at vertices.
For this reason, we will try move these curvatures to curvature functions
based at vertices.

In the smooth case, the scalar curvature has interesting variation formulas.
For instance, we may consider the Einstein-Hilbert functional,
\[
\mathcal{EH}\left(  M,g\right)  =\int_{M}R_{g}dV_{g},
\]
where $R_{g}$ is the scalar curvature and $dV_{g}$ is the Riemannian volume
measure. Note that if $n=2,$ then the Gauss-Bonnet theorem says that
$\mathcal{EH}\left(  M,g\right)  =2\pi\chi\left(  M\right)  ,$ but otherwise
this functional is an interesting one geometrically. A well-known calculation
(see, for instance, \cite{Bes}) shows that if we consider variations of the
Riemannian metric $\delta g=h$ on $M^{n},$ then
\begin{equation}
\delta\mathcal{EH}\left(  M,g\right)  \left[  h\right]  =\int_{M}g\left(
E,h\right)  dV,\label{EH first variation general}%
\end{equation}
where $E=R_{ij}-\frac{1}{2}Rg_{ij}$ is the Einstein tensor. It follows that
critical points of this functional satisfy
\begin{equation}
R_{ij}-\frac{1}{2}Rg_{ij}=0.\label{Einstein zero eqn}%
\end{equation}
Taking the trace of this equation with respect to the metric, we see that, if
$n\neq2$, this implies that
\begin{equation}
R_{ij}=0,\label{ricci flat eqn}%
\end{equation}
which is the Einstein or Ricci-flat equation. It also makes sense to consider
either the constrained problem where volume is equal to one, or to consider
the normalized functional
\[
\frac{\mathcal{EH}\left(  M^{n},g\right)  }{\mathcal{V}\left(  M^{n},g\right)
^{\left(  n-2\right)  /n}},
\]
where $\mathcal{V}$ is the volume. In both cases we find that critical points
under a conformal variation correspond to metrics satisfying
\begin{equation}
R_{ij}=\lambda g_{ij}\label{einstein eqn}%
\end{equation}
for a constant $\lambda.$ Taking the trace and integrating, we see that
\begin{equation}
\lambda=\frac{1}{n}\frac{\mathcal{EH}\left(  M^{n},g\right)  }{\mathcal{V}%
\left(  M^{n},g\right)  }.\label{lambda formula}%
\end{equation}

We now consider Regge's analogue to the Einstein-Hilbert functional on
three-dimensional piecewise flat manifolds.

\begin{definition}
\label{definition EHR}If $\left(  M^{3},T,\ell\right)  $ is a
three-dimensional piecewise flat manifold, the \emph{Einstein-Hilbert-Regge
functional} $\mathcal{EHR}$ is%
\begin{equation}
\mathcal{EHR}\left(  M,T,\ell\right)  =\sum_{i,j}K_{ij},
\end{equation}
where the sum is over all edges $\left\{  i,j\right\}  \in E\left(  T\right)
.$
\end{definition}

The analogue of the first variation formula (\ref{EH first variation general})
is
\begin{equation}
\frac{\partial}{\partial\ell_{ij}}\mathcal{EHR}\left(  M,T,\ell\right)
=2\pi-\sum_{k,\ell}\beta_{ij,k\ell}.\label{regge variation}%
\end{equation}
This was proven by Regge \cite{Reg} and follows immediately from the
Schl\"{a}fli formula (see \cite{Mil}). By analogy with the smooth case, we
define the following.

\begin{definition}
A piecewise flat manifold $\left(  M^{3},T,\ell\right)  $ is \emph{Ricci flat}
if
\[
K_{ij}=0
\]
for all edges $\left\{  i,j\right\}  .$ It is \emph{Einstein} with Einstein
constant $\lambda\in\mathbb{R}$ if
\begin{equation}
K_{ij}=\lambda\ell_{ij}\frac{\partial\mathcal{V}}{\partial\ell_{ij}%
},\label{discrete einstein eqn}%
\end{equation}
for all edges $\left\{  i,j\right\}  ,$ where
\[
\mathcal{V}\left(  M,T,\ell\right)  \mathcal{=}\sum_{i,j,k,\ell}V_{ijk\ell}%
\]
is the total volume.
\end{definition}

The term on the left of (\ref{discrete einstein eqn}) can be made more
explicit. Note that
\begin{align*}
3\mathcal{V}\left(  M,T,\ell\right)   &  =\left.  \frac{d}{da}\right\vert
_{a=1}a^{3}\mathcal{V}\left(  M,T,\ell\right)  \\
&  =\left.  \frac{d}{da}\right\vert _{a=1}\mathcal{V}\left(  M,T,a~\ell
\right)  \\
&  =\sum_{i,j}\ell_{ij}\frac{\partial\mathcal{V}}{\partial\ell_{ij}},
\end{align*}
so
\[
\lambda=\frac{\mathcal{EHR}\left(  M,T,\ell\right)  }{3\mathcal{V}\left(
M,T,\ell\right)  },
\]
analogous to the smooth formula (\ref{lambda formula}). Furthermore, we can
explicitly compute for any tetrahedron $\left\{  i,j,k,\ell\right\}  $ that%
\begin{equation}
\frac{\partial V_{ijk\ell}}{\partial\ell_{ij}}=\frac{1}{6}\ell_{ij}\ell
_{k\ell}\cot\beta_{k\ell,ij}.\label{variation fmla volume length}%
\end{equation}
For brevity, we omit the proof of (\ref{variation fmla volume length}) since
we will not use it. However, it can be proven by a direct computation of the
derivatives of volume and of the dihedral angle.

As in the smooth case, studying the Einstein equation is quite difficult.
Progress can be made by considering only certain variations of the metric. If
one takes $\delta g=fg$ for a function $f,$ we have a conformal variation.
Under conformal variations, the scalar curvature satisfies%
\begin{equation}
\delta R\left[  fg\right]  =\left(  1-n\right)  \triangle
f-Rf.\label{scalar curvature var smooth}%
\end{equation}
Since, under this variation, $\delta dV=\frac{n}{2}fdV,$ the variation of
$\mathcal{EH}$ under a conformal variation is
\begin{align*}
\delta\mathcal{EH}\left(  M,g\right)  \left[  fg\right]   &  =\int\left[
\left(  1-n\right)  \triangle f+\left(  \frac{n}{2}-1\right)  Rf\right]  dV\\
&  =\left(  \frac{n}{2}-1\right)  \int RfdV.
\end{align*}
In particular, $\left(  \frac{n}{2}-1\right)  R$ is the gradient of
$\mathcal{EH}$ with respect to the $L_{2}\left(  M,dV\right)  $ inner product.
We see that critical points of the functional under conformal variations
correspond to when the scalar curvature is zero. Note that if we either (a)
restrict to metrics with volume 1 or (b) normalize the functional, then we get
constant scalar curvature metrics as critical points. The second variation of
$\mathcal{EH}$ can be calculated from (\ref{scalar curvature var smooth}) to
be
\begin{align*}
\delta^{2}\mathcal{EH}\left(  M,g\right)  \left[  fg,fg\right]   &  =\left(
\frac{n}{2}-1\right)  \int_{M}\left[  \left(  1-n\right)  f\triangle f+\left(
\frac{n}{2}-1\right)  Rf^{2}\right]  dV\\
&  =\left(  \frac{n}{2}-1\right)  \int_{M}\left[  \left(  n-1\right)
\left\vert \nabla f\right\vert ^{2}+\left(  \frac{n}{2}-1\right)
Rf^{2}\right]  dV.
\end{align*}
The second variation can be used to check to see if critical points are rigid,
i.e., if there is a family of deformations of critical metrics.

The discrete formulation is motivated by the work of Cooper and Rivin
\cite{CR}, who looked at the sphere packing case. The goal is to formulate a
conformal theory in the piecewise flat setting which allows simple variation
formulas as in the smooth setting. First, we define the scalar curvature.

\begin{definition}
\label{definition 3d scalar curvature}The \emph{scalar curvature} $K$ of a
three-dimensional piecewise flat, metrized manifold $\left(  M^{3},T,d\right)
$ is the function on the vertices defined by
\[
K_{i}=\sum_{j}\left(  2\pi-\sum_{k,\ell}\beta_{ij,k\ell}\right)  d_{ij}.
\]

\end{definition}

This definition is much more general than the one in \cite{CR}, but restricts
to almost the same definition in the case of sphere packing (see Section
\ref{section special cases} for the details). This curvature is in many ways
analogous to the scalar curvature measure $RdV$ on a Riemannian manifold. Note
that, unlike the edge curvatures $K_{ij},$ this curvature depends on the
metric, not only the piecewise flat manifold. We also note the following
important fact.

\begin{proposition}
If $\left(  M^{3},T,d\right)  $ is a three-dimensional piecewise flat,
metrized manifold, then the Einstein-Hilbert-Regge functional can be written
\[
\mathcal{EHR}\left(  M,T,\ell\left(  d\right)  \right)  =\sum_{i}K_{i}.
\]

\end{proposition}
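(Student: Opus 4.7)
The plan is to expand the definition of $K_i$ and reorganize the double sum over vertices $i$ and their neighbors $j$ as a sum over edges, then identify the result with $\mathcal{EHR}$.

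First I would write
\[
\sum_i K_i = \sum_i \sum_j \Bigl(2\pi - \sum_{k,\ell}\beta_{ij,k\ell}\Bigr) d_{ij},
\]
where the inner sum over $j$ is over vertices $j$ such that $\{i,j\}\in E(T)$. The double sum $\sum_i\sum_j$ thus runs over all directed edges $(i,j)\in E_+(T)$, and for each undirected edge $\{i,j\}$ contributes exactly two terms: one with coefficient $d_{ij}$ (coming from $(i,j)$) and one with coefficient $d_{ji}$ (coming from $(j,i)$).

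Next I would observe that the angle factor $2\pi - \sum_{k,\ell}\beta_{ij,k\ell}$ is attached to the undirected edge $\{i,j\}$, since the dihedral angle $\beta_{ij,k\ell}$ along the edge in a tetrahedron $\{i,j,k,\ell\}$ is intrinsic to the edge and does not depend on its orientation; the summation set of pairs $(k,\ell)$ is likewise the same whether one views the edge as $(i,j)$ or $(j,i)$. Therefore the two directed contributions combine to give
\[
\Bigl(2\pi - \sum_{k,\ell}\beta_{ij,k\ell}\Bigr)(d_{ij}+d_{ji})
= \Bigl(2\pi - \sum_{k,\ell}\beta_{ij,k\ell}\Bigr)\ell_{ij} = K_{ij},
\]
using the defining relation $\ell_{ij} = d_{ij}+d_{ji}$ from Definition \ref{definition metric} and the definition of $K_{ij}$ from Definition \ref{definition 3d edge curvature}.

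Summing over undirected edges yields $\sum_i K_i = \sum_{\{i,j\}\in E(T)} K_{ij} = \mathcal{EHR}(M,T,\ell(d))$, as desired. There is no real obstacle: the only subtlety worth naming is keeping the bookkeeping of directed versus undirected edges straight, which is exactly what allows the asymmetric ``half-lengths'' $d_{ij}$ to reassemble into the symmetric edge length $\ell_{ij}$.
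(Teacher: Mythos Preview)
Your proof is correct and is precisely the argument the paper has in mind: the paper's own proof is the one-line ``Simply do the sum and recall that $d_{ij}+d_{ji}=\ell_{ij}$,'' and you have spelled out exactly that computation with the directed-versus-undirected edge bookkeeping made explicit.
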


\begin{proof}
Simply do the sum and recall that $d_{ij}+d_{ji}=\ell_{ij}.$
\end{proof}

Now let us define conformal structure. The motivation for the definition will
be seen in Theorems \ref{2d variational} and \ref{theorem conformal 3d}, and
we will see some examples in Section \ref{section special cases}. The reader
may want to recall Definition \ref{definition met}.

\begin{definition}
A \emph{conformal structure} $\mathcal{C}\left(  M,T,U\right)  $ on a
triangulated manifold $\left(  M,T\right)  $ on an open set $U\subset V\left(
T\right)  ^{\ast}$ is a smooth map%
\[
\mathcal{C}\left(  M,T,U\right)  :U\rightarrow\mathfrak{met}\left(
M,T\right)
\]
such that if $d=\mathcal{C}\left(  M,T,U\right)  \left[  f\right]  $ then for
each $\left(  i,j\right)  \in E_{+}$ and $k\in V$,
\[
\frac{\partial\ell_{ij}}{\partial f_{i}}=d_{ij}%
\]
and
\[
\frac{\partial d_{ij}}{\partial f_{k}}=0
\]
if $k\neq i$ and $k\neq j$.
\end{definition}

\begin{notation}
Often we will suppress the $U$ and simply refer to the domain of the conformal
structure $\mathcal{C}\left(  M,T\right)  .$
\end{notation}

We can also define a conformal variation.

\begin{definition}
A \emph{conformal variation} of a piecewise flat, metrized manifold $\left(
M,T,\bar{d}\right)  $ is a smooth curve $f:\left(  -\varepsilon,\varepsilon
\right)  \rightarrow V\left(  T\right)  ^{\ast}$ such that there exists a
conformal structure $\mathcal{C}\left(  M,T,U\right)  $ with $f\left(
-\varepsilon,\varepsilon\right)  \subset U$ and $f\left(  0\right)  =\bar{d}.$
We call such a conformal structure an \emph{extension} of the conformal variation.
\end{definition}

An important point is that if we have a conformal structure or conformal
variation, quantities such as $\frac{\partial\ell_{ij}}{\partial f_{j}}$ make
sense. We will usually try to make statements in terms of $\frac{df_{i}}{dt}$
in order to reveal the appearance of discrete Laplacians, however sometimes it
will be more convenient to express terms as partial derivatives. We note that
a conformal variation is essentially independent of the extension in the
following sense.

\begin{proposition}
Under a conformal variation $f\left(  t\right)  $ of $\left(  M,T,\bar
{d}\right)  $, we have at $t=0$ that
\[
\frac{d}{dt}\ell_{ij}=\bar{d}_{ij}\frac{df_{i}}{dt}+\bar{d}_{ji}\frac{df_{j}%
}{dt}.
\]
In particular, at $t=0,$ for a given $\frac{df}{dt}\left(  0\right)  ,$ the
variation of the length is independent of the extension.
\end{proposition}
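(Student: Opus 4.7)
The plan is a direct application of the chain rule together with the two defining properties of a conformal structure. The proposition is essentially a book-keeping statement: once we know the partial derivatives of $\ell_{ij}$ with respect to each $f_k$, the formula for $\frac{d}{dt}\ell_{ij}$ along any curve $f(t)$ follows automatically.

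First I would fix an extension $\mathcal{C}(M,T,U)$ of the conformal variation $f(t)$, so that on $U$ the directed-edge quantities $d_{ij}$ and the edge lengths $\ell_{ij}=d_{ij}+d_{ji}$ are genuine smooth functions of $f\in V(T)^{\ast}$. The chain rule then gives
\begin{equation*}
\frac{d}{dt}\ell_{ij}\bigg|_{t=0}=\sum_{k\in V(T)}\frac{\partial\ell_{ij}}{\partial f_{k}}\bigg|_{f=\bar{d}}\cdot\frac{df_{k}}{dt}(0).
\end{equation*}
So it suffices to compute the partial derivatives $\partial\ell_{ij}/\partial f_{k}$ at $f=\bar{d}$.

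Next I would read off these partials from the definition of a conformal structure. For $k=i$, the first condition gives $\partial\ell_{ij}/\partial f_{i}=d_{ij}$ directly. For $k=j$, applying that same condition to the directed edge $(j,i)$ yields $\partial\ell_{ji}/\partial f_{j}=d_{ji}$, and since $\ell_{ij}=\ell_{ji}$ as an undirected edge length we obtain $\partial\ell_{ij}/\partial f_{j}=d_{ji}$. For $k\neq i,j$, the second condition says $\partial d_{ij}/\partial f_{k}=\partial d_{ji}/\partial f_{k}=0$, hence $\partial\ell_{ij}/\partial f_{k}=0$. Evaluating all three at $f=\bar{d}$ and substituting into the chain-rule expression gives precisely
\begin{equation*}
\frac{d}{dt}\ell_{ij}\bigg|_{t=0}=\bar{d}_{ij}\,\frac{df_{i}}{dt}(0)+\bar{d}_{ji}\,\frac{df_{j}}{dt}(0).
\end{equation*}

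Finally, for the second assertion, the right-hand side depends only on the base metric $\bar{d}$ and on the tangent vector $df/dt(0)\in V(T)^{\ast}$; no further information about the extension $\mathcal{C}(M,T,U)$ appears. Hence two different extensions of the same variation must produce identical values of $d\ell_{ij}/dt$ at $t=0$.

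There is no real obstacle here beyond reconciling the directed-edge versus undirected-edge notation: one must be careful that the identity $\partial\ell_{ij}/\partial f_{j}=d_{ji}$ really follows from the defining condition, which was written asymmetrically in terms of the directed edge $(i,j)$. Once this symmetry point is observed, the proof is a two-line chain-rule computation.
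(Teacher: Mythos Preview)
Your proof is correct and follows essentially the same approach as the paper: apply the chain rule and read off the partial derivatives $\partial\ell_{ij}/\partial f_k$ from the two defining conditions of a conformal structure. The paper's proof is simply the two-line version of what you wrote, omitting the explicit verification that $\partial\ell_{ij}/\partial f_k=0$ for $k\neq i,j$ and that $\partial\ell_{ij}/\partial f_j=d_{ji}$ follows from the symmetry $\ell_{ij}=\ell_{ji}$; your added care on these points is appropriate.
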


\begin{proof}
From the definition of conformal structure, we have
\begin{align*}
\frac{d}{dt}\ell_{ij} &  =\frac{\partial\ell_{ij}}{\partial f_{i}}\frac
{df_{i}}{dt}+\frac{\partial\ell_{ij}}{\partial f_{j}}\frac{df_{j}}{dt}\\
&  =d_{ij}\frac{df_{i}}{dt}+d_{ji}\frac{df_{j}}{dt}.
\end{align*}

\end{proof}

\begin{notation}
In the sequel, when we suppose a conformal variation, it will be understood
that quantities such as $\frac{df_{i}}{dt}$ are evaluated at $t=0,$ though not stated.
\end{notation}

There are often more than one extension to a conformal variation. For
instance, for a triangle $\left\{  1,2,3\right\}  ,$ we may extend the metric
defined by $d_{ij}=\frac{1}{2}$ for all $\left(  i,j\right)  \in E_{+}$ to
several families where $f_{i}\left(  t\right)  =tx_{i},$ such as
\[
d_{ij}\left(  t\right)  =\frac{1}{2}\exp\left(  tx_{i}\right)  ,
\]
which corresponds to a circle packing conformal structure (see Section
\ref{section sphere packing}), and
\[
d_{ij}\left(  t\right)  =\frac{1}{2}\exp\left(  \frac{t}{2}\left(  x_{i}%
+x_{j}\right)  \right)  ,
\]
which corresponds to a perpendicular bisector conformal structure (see Section
\ref{section perpendicular bisector}).

\begin{remark}
Often a conformal structure will be generated from a base metric, much the
same way a conformal class on a Riemannian manifolds can be described as the
equivalence class of metrics $e^{f}g_{0},$ where $f$ is a function on the
manifold and $g_{0}$ is the base Riemannian metric. However, we have not
defined it thus and, in general, one must be careful how the structures are
defined if one wishes to partition all piecewise flat manifolds into conformal
classes. We do not attempt this here, though there is a straightforward way to
do this for perpendicular bisector conformal structures seen in Section
\ref{section perpendicular bisector}.
\end{remark}

In two dimensions, the fact that curvatures arise from conformal variations of
a functional is not obvious, but can be proven.

\begin{theorem}
\label{2d variational}Fix a conformal structure $\mathcal{C}\left(
M^{2},T,U\right)  $ on a two dimensional triangulated manifold and suppose
that $U$ is simply connected. Then there is a functional $F:U\rightarrow
\mathbb{R}$ such that
\[
\frac{\partial F}{\partial f_{i}}=K_{i}%
\]
for each $i\in V\left(  T\right)  .$ Furthermore, the second variation of the
functional under a conformal variation $f\left(  t\right)  $ can be expressed
as
\begin{equation}
\frac{d^{2}F}{dt^{2}}=\frac{1}{2}\sum_{i,j}\frac{\ell_{ij}^{\ast}}{\ell_{ij}%
}\left(  \frac{df_{j}}{dt}-\frac{df_{i}}{dt}\right)  ^{2}+K_{i}\frac
{d^{2}f_{i}}{dt^{2}}.\label{2d functional deriv laplace}%
\end{equation}

\end{theorem}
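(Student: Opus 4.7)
The plan is to produce $F$ by integrating the $1$-form $\omega=\sum_{i\in V(T)}K_i\,df_i$ on $U$. Since $U$ is simply connected, it suffices to show that $\omega$ is closed, i.e., that $\partial K_i/\partial f_j=\partial K_j/\partial f_i$ for every pair of vertices $(i,j)$.

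To verify this, write $K_i=2\pi-\sum_{\{i,j,k\}}\gamma_{i,jk}$ and apply the two-dimensional angle-variation formula of Theorem \ref{2D} to each $\gamma_{i,jk}$. Anticipating the shape of (\ref{2d functional deriv laplace}), I expect to obtain
\[
\frac{\partial K_i}{\partial f_j}=-\frac{\ell_{ij}^{*}}{\ell_{ij}}\qquad\text{whenever }\{i,j\}\in E(T),
\]
and zero otherwise, which is manifestly symmetric in $i$ and $j$; closedness, and hence the existence of $F$, follow immediately. The diagonal entries can be obtained either directly from Theorem \ref{2D} (using the identification $\ell_{ij}^{*}=h_{ij,k}+h_{ij,\ell}$ from Definition \ref{definition 2d dual length} to combine the contributions from the two triangles sharing $\{i,j\}$), or, more cheaply, by differentiating the Gauss--Bonnet identity $\sum_i K_i=2\pi\chi(M)$ in $f_j$ to deduce the column-sum relation
\[
\frac{\partial K_j}{\partial f_j}=-\sum_{i\neq j}\frac{\partial K_i}{\partial f_j}=\sum_{i}\frac{\ell_{ij}^{*}}{\ell_{ij}},
\]
the last sum being over the neighbors of $j$.

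For the second variation, differentiate $dF/dt=\sum_i K_i\,df_i/dt$ to get
\[
\frac{d^{2}F}{dt^{2}}=\sum_{i,j}\frac{\partial K_i}{\partial f_j}\frac{df_i}{dt}\frac{df_j}{dt}+\sum_i K_i\,\frac{d^{2}f_i}{dt^{2}}.
\]
Substituting the matrix entries above and grouping by unoriented edges, each edge $\{i,j\}$ contributes $\frac{\ell_{ij}^{*}}{\ell_{ij}}\bigl((df_i/dt)^{2}+(df_j/dt)^{2}-2(df_i/dt)(df_j/dt)\bigr)=\frac{\ell_{ij}^{*}}{\ell_{ij}}(df_j/dt-df_i/dt)^{2}$; the factor $1/2$ in (\ref{2d functional deriv laplace}) accounts for the edge-ordering convention used in that sum.

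The main obstacle is the first step: collapsing the many angle-variations delivered by Theorem \ref{2D} to the single compact expression $\partial K_i/\partial f_j=-\ell_{ij}^{*}/\ell_{ij}$, with the contributions from the two adjacent triangles at $\{i,j\}$ combining additively through $\ell_{ij}^{*}=h_{ij,k}+h_{ij,\ell}$. Once this per-edge identity is in hand, symmetry of mixed partials, existence of $F$, and the discrete-Laplacian form of the second variation are all formal consequences; the geometric content of the theorem is entirely concentrated in the angle computation of Theorem \ref{2D}.
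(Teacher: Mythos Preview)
Your proposal is correct and follows essentially the same route as the paper: define $F$ by integrating the $1$-form $\omega=\sum_i K_i\,df_i$, verify closedness by computing $\partial K_i/\partial f_j=-\ell_{ij}^{\ast}/\ell_{ij}$ from Theorem~\ref{2D}, and then obtain the second variation by differentiating $\sum_i K_i\,df_i/dt$ and regrouping by edges. Your writeup is in fact slightly more explicit than the paper's about the second-variation step and the role of $\ell_{ij}^{\ast}=h_{ij,k}+h_{ij,\ell}$ in combining the two triangle contributions.
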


This sort of formulation of the prescribed curvature problem in a variational
framework has been studied by many people. See, for instance, \cite{DGL}
\cite{Riv1} \cite{CdV} \cite{CL} \cite{BS1} \cite{Spr} \cite{Guo}. However, no source to
date has unified the theorem in the way of Theorem \ref{2d variational}.

In three dimensions, the Einstein-Hilbert-Regge functional is a natural one to consider.

\begin{theorem}
\label{theorem conformal 3d}For any conformal variation $f\left(  t\right)  $
of a three dimensional, piecewise flat, metrized manifold $\left(  M,T,\bar
{d}\right)  ,$ we have,
\begin{align}
\frac{d}{dt}\mathcal{EHR}\left(  M,T,\ell\left(  f\left(  t\right)  \right)
\right)   &  =\sum_{i}K_{i}\frac{df_{i}}{dt}\label{EHR first var}\\
\frac{d^{2}}{dt^{2}}\mathcal{EHR}\left(  M,T,\ell\left(  f\left(  t\right)
\right)  \right)   &  =\sum_{i}\sum_{j\neq i}\left(  \frac{\ell_{ij}^{\ast}%
}{\ell_{ij}}-\frac{q_{ij}}{2\ell_{ij}}K_{ij}\right)  \left(  \frac{df_{j}}%
{dt}-\frac{df_{i}}{dt}\right)  ^{2}\label{EHR second var gen}\\
&  \;\;\;\;\;\;\;+\sum_{i}K_{i}\left[  \left(  \frac{df_{i}}{dt}\right)
^{2}+\frac{d^{2}f_{i}}{dt^{2}}\right]  ,\nonumber
\end{align}
where%
\[
q_{ij}=\frac{\partial d_{ij}}{\partial f_{j}}=\frac{\partial d_{ji}}{\partial
f_{i}}.
\]
Thus a critical point of $\mathcal{EHR}$ corresponds to when $K_{i}=0$ for all
$i$ and at a critical metric,
\begin{equation}
\frac{d^{2}}{dt^{2}}\mathcal{EHR}\left(  M,T,\ell\left(  f\left(  t\right)
\right)  \right)  =\sum_{i}\sum_{j\neq i}\left(  \frac{\ell_{ij}^{\ast}}%
{\ell_{ij}}-\frac{q_{ij}}{2\ell_{ij}}K_{ij}\right)  \left(  \frac{df_{j}}%
{dt}-\frac{df_{i}}{dt}\right)  ^{2}.\label{EHR second var zero scalar}%
\end{equation}
Furthermore, at a critical point for the general $\mathcal{EHR}\left(
M,T,\ell\right)  ,$ we must have $K_{ij}=0$ (see (\ref{regge variation})), and
hence here we have
\begin{equation}
\frac{d^{2}}{dt^{2}}\mathcal{EHR}\left(  M,T,\ell\left(  f\left(  t\right)
\right)  \right)  =\sum_{i}\sum_{j\neq i}\frac{\ell_{ij}^{\ast}}{\ell_{ij}%
}\left(  \frac{df_{j}}{dt}-\frac{df_{i}}{dt}\right)  ^{2}%
.\label{EHR second var ricci flat}%
\end{equation}

\end{theorem}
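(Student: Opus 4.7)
The first variation should follow directly from Regge's formula (\ref{regge variation}) together with the conformal expression for $d\ell_{ij}/dt$ already proved above. Apply the chain rule to get
\[
\frac{d\mathcal{EHR}}{dt} = \sum_{\{i,j\}}\Bigl(2\pi - \sum_{k,\ell}\beta_{ij,k\ell}\Bigr)\frac{d\ell_{ij}}{dt},
\]
substitute $d\ell_{ij}/dt = \bar d_{ij}\,df_i/dt + \bar d_{ji}\,df_j/dt$ to unfold the sum over unordered edges into a sum over ordered pairs $(i,j)\in E_{+}$, and regroup by the first coordinate. Definition \ref{definition 3d scalar curvature} then identifies each vertex-sum as $K_i$, yielding (\ref{EHR first var}).

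For the second variation, differentiate this identity once more. The term carrying the second derivatives of $f$ produces $\sum_i K_i\,d^2 f_i/dt^2$ without any further work, so the content lies in $\sum_i (dK_i/dt)(df_i/dt)$. Writing $K_i = \sum_j \eta_{ij} d_{ij}$ with $\eta_{ij} = K_{ij}/\ell_{ij}$, I would split this into an \emph{edge-length piece} (where the $d_{ij}$ factors move) and an \emph{angle piece} (where $\eta_{ij}$ moves). For the edge-length piece, the conformal axiom $\partial d_{ij}/\partial f_j = q_{ij}$ together with $\ell_{ij}=d_{ij}+d_{ji}$ and the symmetry $q_{ij}=q_{ji}$ forces $\partial d_{ij}/\partial f_i = d_{ij}-q_{ij}$. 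Expanding $dd_{ij}/dt$ and summing produces the diagonal contribution $\sum_i K_i(df_i/dt)^2$, while the remaining mixed terms, once symmetrized in $i,j$ using the symmetry of $\eta_{ij}q_{ij}$, collapse to $-\sum_{i}\sum_{j\neq i}\frac{q_{ij}K_{ij}}{2\ell_{ij}}(df_j/dt-df_i/dt)^2$, matching the $q_{ij}$-coefficient in (\ref{EHR second var gen}).

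The angle piece is where the real work happens, and is where I expect the main obstacle. The target is
\[
-\sum_i \frac{df_i}{dt}\sum_j d_{ij}\sum_{k,\ell}\frac{d\beta_{ij,k\ell}}{dt} \;=\; \sum_i\sum_{j\neq i}\frac{\ell_{ij}^{\ast}}{\ell_{ij}}\Bigl(\frac{df_j}{dt}-\frac{df_i}{dt}\Bigr)^2.
\]
My plan is to plug in Theorem \ref{3D} for $d\beta_{ij,k\ell}/dt$, which will express each dihedral variation in terms of the signed heights $h_{ij,k}$, $h_{ijk,\ell}$ and hence the dual areas $A_{ij,k\ell}$; the identification $\sum_{k,\ell}A_{ij,k\ell}=\ell_{ij}^{\ast}$ from Definition \ref{definition 3d dual length} is what ultimately produces the Laplacian coefficient. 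The difficulty is purely organizational: because each dihedral angle depends on all four vertices of its tetrahedron and the conformal pre-metric is asymmetric ($d_{ij}\neq d_{ji}$), the raw expression will involve four different $df_m/dt$ contributions per tetrahedron that must be rearranged into the clean edge-difference form $(df_j/dt-df_i/dt)^2$. A Schl\"afli-type identity $\sum_{\{i,j\}\subset\{i,j,k,\ell\}}\ell_{ij}\,d\beta_{ij,k\ell}=0$ within each tetrahedron should be the algebraic lever that makes this collapse occur. Once (\ref{EHR second var gen}) is established, the two corollaries are immediate: critical points of $\mathcal{EHR}$ under conformal variations require $K_i=0$ by (\ref{EHR first var}), at which point the $(df_i/dt)^2$ term in (\ref{EHR second var gen}) drops and (\ref{EHR second var zero scalar}) results; and critical points of the unconstrained $\mathcal{EHR}$ additionally satisfy $K_{ij}=0$ on every edge by (\ref{regge variation}), killing the $q_{ij}$ term and giving (\ref{EHR second var ricci flat}).
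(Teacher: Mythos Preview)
Your plan is correct and matches the paper's argument essentially step for step: the paper packages the ``angle piece'' into a separate statement (Theorem \ref{theorem 3d curvature variation}) giving $dK_i/dt$ in Laplacian form, and then multiplies by $df_i/dt$, sums, and symmetrizes exactly as you propose; the ``edge-length piece'' is handled there via the same identity $\partial d_{ij}/\partial f_i = d_{ij}-q_{ij}$ and the symmetry $q_{ij}=q_{ji}$ you wrote down. One small sharpening: Theorem \ref{3D} does not hand you the individual $\partial\beta_{ij,k\ell}/\partial f_m$ but rather the already-weighted sums $\sum_j d_{ij}\,\partial\beta_{ij}/\partial f_m$, which is precisely the combination appearing in your angle piece --- so the ``organizational difficulty'' you anticipate largely evaporates, and the Schl\"afli identity is used only indirectly (it is what produces the last line of Theorem \ref{3D}, the $m=i$ case, which is the term that makes everything collapse into differences $df_j/dt-df_i/dt$).
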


Theorem \ref{theorem conformal 3d} motivates the definition of constant scalar
curvature metrics, as seen from the following.

\begin{corollary}
\label{corollary volume variation}For any conformal structure of $\left(
M,T\right)  ,$ we have
\begin{align*}
\frac{\partial}{\partial f_{i}}\mathcal{EHR}\left(  M,T,\ell\left(  f\right)
\right)   &  =K_{i},\\
\frac{\partial}{\partial f_{i}}\mathcal{V}\left(  M,T,\ell\left(  f\right)
\right)   &  =V_{i},
\end{align*}
where
\[
V_{i}=\frac{1}{3}\sum_{j,k,\ell}h_{ijk,\ell}A_{ijk}.
\]

\end{corollary}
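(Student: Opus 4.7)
The first identity $\partial \mathcal{EHR}/\partial f_i = K_i$ should fall out immediately from Theorem \ref{theorem conformal 3d}. I would choose, inside any extension of the given conformal structure, a one-parameter variation $f(t)$ with $\frac{df_i}{dt}\big|_{t=0} = 1$ and $\frac{df_m}{dt}\big|_{t=0} = 0$ for $m\neq i$. Substituting into (\ref{EHR first var}) collapses the right-hand sum to the single term $K_i$, and since this is by definition $\partial\mathcal{EHR}/\partial f_i$, we are done.

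For the volume formula, the strategy is to use additivity $\mathcal{V} = \sum_{\{i,j,k,\ell\}}V_{ijk\ell}$ and reduce to a per-tetrahedron claim: for each tetrahedron $\{i,j,k,\ell\}$ containing $i$,
\[
\frac{\partial V_{ijk\ell}}{\partial f_i} = \tfrac{1}{3}\bigl(h_{ijk,\ell}A_{ijk} + h_{ij\ell,k}A_{ij\ell} + h_{ik\ell,j}A_{ik\ell}\bigr).
\]
Summing over tetrahedra containing $i$ will then yield exactly $V_i$ with the summation convention laid out in the Notation block following Definition \ref{definition 3d dual length}.

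To establish the per-tetrahedron claim, I would work geometrically. Embed $\{i,j,k,\ell\}$ in $\mathbb{R}^3$ so that $v_j,v_k,v_\ell$ are fixed while only $v_i$ moves. Using the cone decomposition $V_{ijk\ell} = \frac{1}{3}A_{jk\ell}\,h$, where $h$ is the height of $v_i$ above the opposite face, and observing that $A_{jk\ell}$ is constant under the variation (since the opposite face is fixed), one gets $\partial V_{ijk\ell}/\partial f_i = \frac{1}{3}\mathbf{A}_{jk\ell}\cdot\mathbf{w}$, where $\mathbf{A}_{jk\ell}$ is the inward-pointing area vector of $\{j,k,\ell\}$ and $\mathbf{w}$ is the velocity of $v_i$ determined by the three conditions $(v_i-v_m)\cdot\mathbf{w} = \ell_{im}d_{im}$ (coming from $\partial\ell_{im}/\partial f_i = d_{im}$ for $m=j,k,\ell$). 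Using $\mathbf{A}_{jk\ell} + \mathbf{A}_{ijk} + \mathbf{A}_{ij\ell} + \mathbf{A}_{ik\ell} = 0$, this is equivalent to showing that $-\mathbf{A}_F\cdot\mathbf{w} = h_{F,\mathrm{opp}}A_F$ for each face $F$ through $i$.

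The main obstacle is precisely this last geometric identity, i.e., showing that the component of $\mathbf{w}$ along the outward normal of the face $\{i,j,k\}$ equals $h_{ijk,\ell}$. My plan to handle this is to expand $\mathbf{w}$ in the basis $\{v_i-v_j,\,v_i-v_k,\,\hat{n}_{ijk}\}$ where $\hat{n}_{ijk}$ is the unit normal to $\{i,j,k\}$ pointing toward $v_\ell$, and to identify the normal coefficient using the cone formula $V_{ijk\ell} = \frac{1}{3}A_{ijk}\cdot(\text{height from }v_\ell)$ together with the defining orthogonality property of the center $c_{ijk\ell}$, which by construction lies above $c_{ijk}$ at signed height exactly $h_{ijk,\ell}$. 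The compatibility condition (\ref{d-condition}) in Definition \ref{definition metric} is what ensures the resulting expressions are consistent and that the three normal-component equations (one per face through $i$) can be solved simultaneously by a single $\mathbf{w}$. Alternatively, as a cross-check, one could differentiate the formula (\ref{variation fmla volume length}) via the chain rule through the three edges $\{i,m\}$, obtaining a cotangent expression, and match it to the dual-height expression via the identities $h_{ij,k} = (d_{ik}-d_{ij}\cos\gamma_{i,jk})/\sin\gamma_{i,jk}$ and $h_{ijk,\ell} = (h_{ij,\ell}-h_{ij,k}\cos\beta_{ij,k\ell})/\sin\beta_{ij,k\ell}$ given in the excerpt.
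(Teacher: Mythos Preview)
Your proposal is correct in outline but takes a somewhat different path from the paper's proof, and your handling of the crucial step could be sharpened.

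For the $\mathcal{EHR}$ identity you and the paper agree: it is immediate from (\ref{EHR first var}) (equivalently (\ref{conformal variation of EHR})).

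For the volume identity the paper does not use the opposite-face cone formula and the area-vector closure $\sum_F \mathbf{A}_F=0$ as you do. Instead it works directly in the setup of Section~\ref{section 3d angle vars}: fixing $f_1,f_2,f_3$ and varying $f_4$, the change in $V_{1234}$ is read off as the sum of three swept wedge volumes, one for each face through $v_4$, namely $\tfrac{1}{3}A_{124}\,\ell_{14}\sin\gamma_{1,24}\,\delta\beta_{12,34}$ and its two analogues. Each of these is then converted to $\tfrac{1}{3}A_{124}h_{124,3}\,\delta f_4$ by a direct application of the already-established formula (\ref{dihedral var fmla}). Thus the paper leverages the dihedral-angle computation it has just done, rather than redoing a normal-component calculation.

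Your route is perfectly valid, and arguably more self-contained, but the step you flag as the ``main obstacle'' is exactly the three-dimensional analogue of Proposition~\ref{straightline}: the velocity $\mathbf{w}$ of the moving vertex $v_i$ equals $(v_i-c_{1234})\,\delta f_i$. The paper states this explicitly in Section~\ref{section 3d angle vars} (``$c_{1234}$, $v_4$, and $v_4'$ are colinear'') and notes that the proof is identical to that of Proposition~\ref{straightline}. Once you have $\mathbf{w}=v_i-c_{1234}$, your desired identity $\hat n_{ijk}^{\,\mathrm{out}}\cdot\mathbf{w}=h_{ijk,\ell}$ is immediate from the definition of $h_{ijk,\ell}$ (the signed distance of $c_{1234}$ from the plane of $\{i,j,k\}$). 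Your plan to ``expand $\mathbf{w}$ in a basis and use the cone formula for $V$ with base $A_{ijk}$'' is more roundabout than necessary; simply invoking the colinearity (Proposition~\ref{straightline} in 3D) gives the normal component in one line and makes the alternative cotangent cross-check via (\ref{variation fmla volume length}) unnecessary.
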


The proof will be given in Section \ref{section 3d curv vars}. Corollary
\ref{corollary volume variation} motivates the following definition.

\begin{definition}
A three-dimensional piecewise flat, metrized manifold $\left(  M^{3}%
,T,d\right)  $ is has \emph{constant scalar curvature} $\lambda$ if
\[
K_{i}=\lambda V_{i}%
\]
for all vertices $i.$
\end{definition}

It is not hard to see that
\[
\sum_{i}V_{i}=3\mathcal{V}.
\]
Summing both sides of the constant scalar curvature equation, we see that
\[
\lambda=\frac{1}{3}\frac{\mathcal{EHR}\left(  M,T,\ell\right)  }%
{\mathcal{V}\left(  M,T,\ell\right)  }.
\]
Note that
\[
V_{i}=\frac{\partial\mathcal{V}}{\partial f_{i}}=\sum_{j}\frac{\partial
\mathcal{V}}{\partial\ell_{ij}}d_{ij},
\]
and so on an Einstein manifold, which would satisfy
\[
\frac{\partial}{\partial\ell_{ij}}\mathcal{EHR=}\frac{K_{ij}}{\ell_{ij}%
}\mathcal{=\lambda}\frac{\partial\mathcal{V}}{\partial\ell_{ij}}%
\]
for each edge, we have that
\[
K_{i}=\sum_{j}\frac{K_{ij}}{\ell_{ij}}d_{ij}=\lambda\sum_{j}\frac
{\partial\mathcal{V}}{\partial\ell_{ij}}d_{ij}=\lambda V_{i}.
\]
We have just proved the following.

\begin{proposition}
If $\left(  M^{3},T,d\right)  $ is a three-dimensional piecewise flat,
metrized manifold which is Einstein, then it has constant scalar curvature.
\end{proposition}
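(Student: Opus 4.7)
The plan is to show directly that the Einstein equation, combined with the chain rule for the conformal structure, forces the constant scalar curvature equation $K_i = \lambda V_i$. The setup consists of three ingredients already in hand: the definition $K_i = \sum_j (2\pi - \sum_{k,\ell} \beta_{ij,k\ell}) d_{ij}$, the Einstein hypothesis $K_{ij} = \lambda \ell_{ij} \partial \mathcal{V}/\partial \ell_{ij}$, and the first half of Corollary \ref{corollary volume variation}, namely $V_i = \partial \mathcal{V}/\partial f_i$.

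First I would rewrite the scalar curvature in a more convenient form. Since $K_{ij} = (2\pi - \sum_{k,\ell} \beta_{ij,k\ell}) \ell_{ij}$ by Definition \ref{definition 3d edge curvature}, Definition \ref{definition 3d scalar curvature} can be rearranged as
\[
K_i = \sum_j \frac{K_{ij}}{\ell_{ij}} d_{ij}.
\]
This is purely bookkeeping and does not use the Einstein hypothesis.

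Next I would plug in the Einstein equation (\ref{discrete einstein eqn}), which gives $K_{ij}/\ell_{ij} = \lambda \, \partial \mathcal{V}/\partial \ell_{ij}$, and obtain
\[
K_i = \lambda \sum_j \frac{\partial \mathcal{V}}{\partial \ell_{ij}} \, d_{ij}.
\]
Finally, I would identify the right-hand side with $\lambda V_i$ by the chain rule: since $\partial \ell_{ij}/\partial f_i = d_{ij}$ and $\partial \ell_{ik}/\partial f_i = d_{ik}$ (and all other edge lengths are independent of $f_i$ by the conformal structure axioms), we get
\[
\frac{\partial \mathcal{V}}{\partial f_i} = \sum_j \frac{\partial \mathcal{V}}{\partial \ell_{ij}} d_{ij} = V_i,
\]
which is exactly the identity recorded in the paragraph before the proposition. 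Combining these steps yields $K_i = \lambda V_i$ as required.

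There is essentially no obstacle: everything needed has been assembled in the discussion preceding the proposition, and the proof reduces to a two-line chain rule computation. The only thing worth being careful about is that Corollary \ref{corollary volume variation} (which supplies $V_i = \partial \mathcal{V}/\partial f_i$) is quoted before it is proved, but we are entitled to use it as it appears earlier in the exposition.
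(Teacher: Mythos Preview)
Your proposal is correct and follows essentially the same approach as the paper: the paper's proof is precisely the chain of equalities $K_i = \sum_j \frac{K_{ij}}{\ell_{ij}} d_{ij} = \lambda \sum_j \frac{\partial \mathcal{V}}{\partial \ell_{ij}} d_{ij} = \lambda V_i$, using the Einstein hypothesis and the chain-rule identity $V_i = \sum_j \frac{\partial \mathcal{V}}{\partial \ell_{ij}} d_{ij}$ recorded just before the proposition.
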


There is a second variation formula for conformal variations of $\mathcal{EHR}%
/\mathcal{V}^{1/3}$ at Einstein manifolds, but for brevity we omit it since it
requires the calculation of $\frac{\partial V_{i}}{\partial f_{j}}.$ With the
results from this paper, it is straightforward to calculate these derivatives.

\section{Variations of angles\label{section angle variations}}

In the rest of this paper, we will use $\delta$ to denote the differential.

\subsection{Two dimensions}%

\begin{figure}
[tb]
\begin{center}
\includegraphics[
natheight=3.858800in,
natwidth=3.245300in,
height=3.7999in,
width=3.1988in
]%
{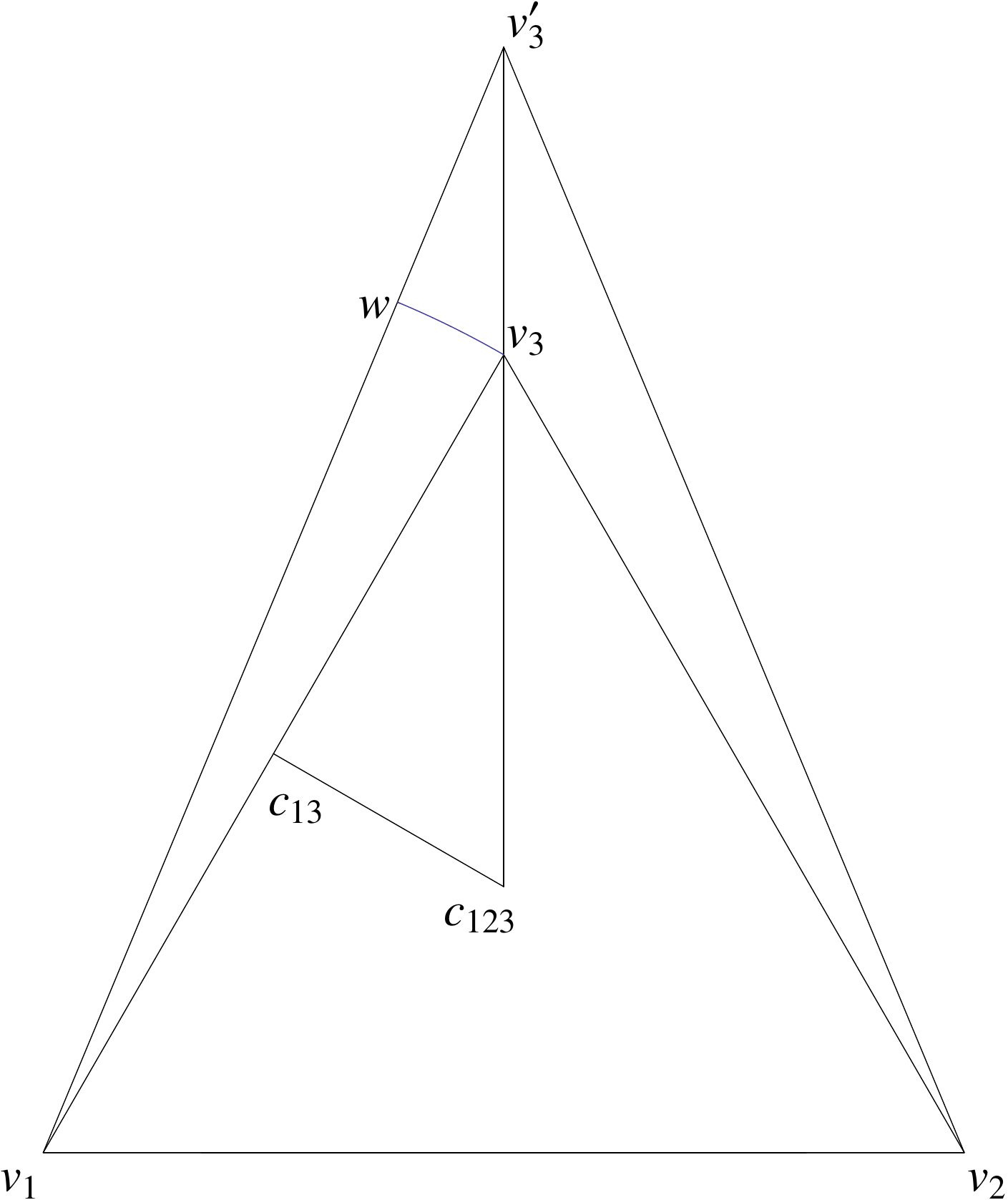}%
\caption{Variation of a triangle.}%
\label{2dpic1}%
\end{center}
\end{figure}
\begin{figure}
[tb]
\begin{center}
\includegraphics[
natheight=3.199600in,
natwidth=5.363200in,
height=3.2287in,
width=5.3923in
]%
{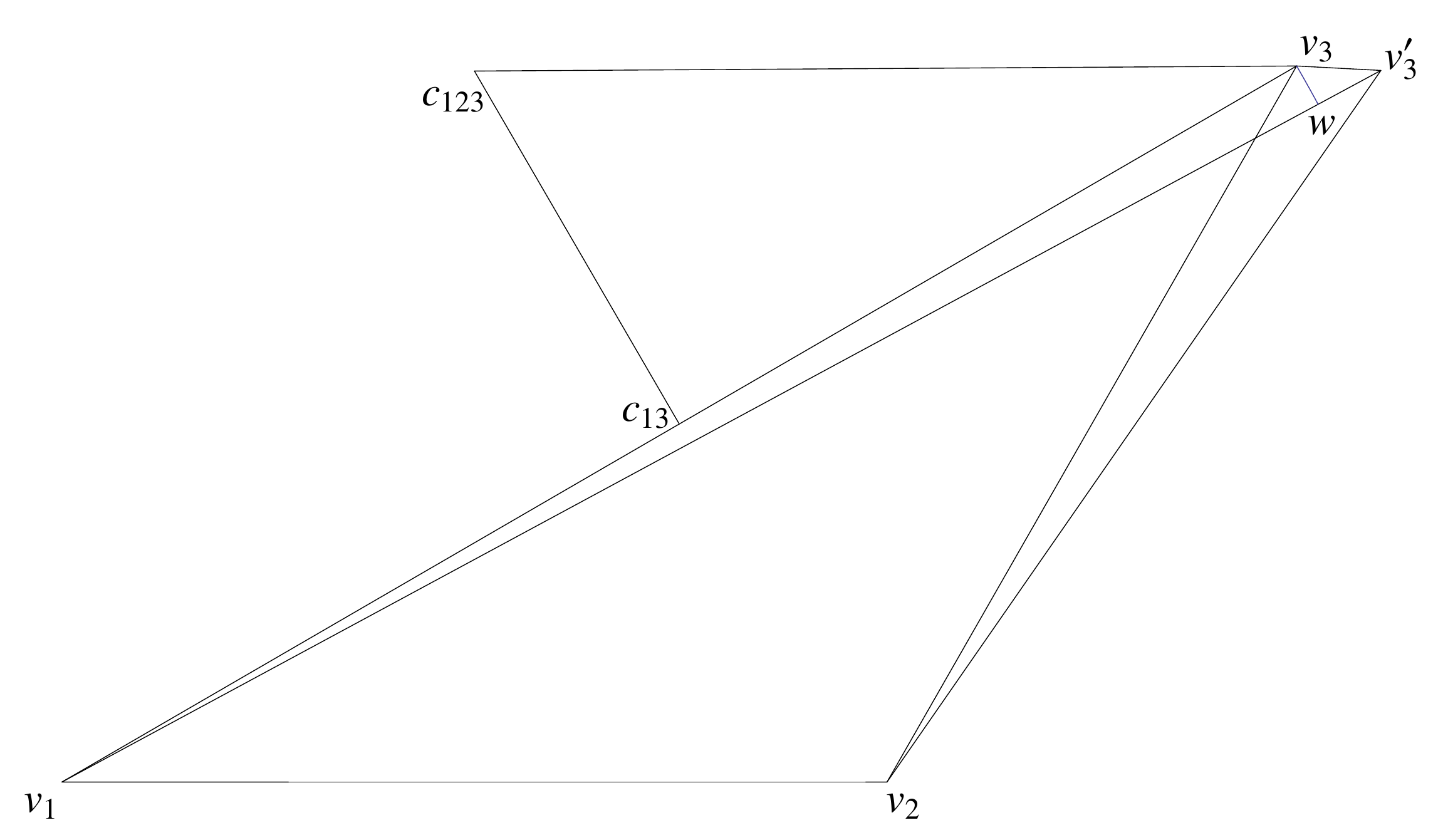}%
\caption{Variation of a triangle where $c_{123}$ is outside the triangle.}%
\label{2dpic2}%
\end{center}
\end{figure}
In this section we will compute the derivative of an angle under a certain
variation of lengths. Consider the Euclidean triangle determined by lengths
$\left(  \ell_{12},\ell_{13},\ell_{23}\right)  $ with vertices $\left\{
v_{1},v_{2},v_{3}\right\}  $ and also the triangle determined by lengths
$\left(  \ell_{12},\ell_{13}+\delta\ell_{13},\ell_{23}+\delta\ell_{23}\right)
$, say with vertices $\left\{  v_{1},v_{2},v_{3}^{\prime}\right\}  $, under
the important assumptions:%
\begin{align}
\delta\ell_{12} &  =0,\label{lengthvar1}\\
\delta\ell_{13} &  =d_{31}\delta f_{3},\label{lengthvar2}\\
\delta\ell_{23} &  =d_{32}\delta f_{3},\label{lengthvar3}%
\end{align}
where $d$ is a metric on $\left\{  1,2,3\right\}  $ inducing lengths $\ell.$

Draw the arc representing $\ell_{14}~\delta\gamma_{1,24},$ which goes through
vertex $v_{3}$ and intersects the segment $v_{1}v_{3}^{\prime}.$ Call this
edge $E.$ It has endpoints $v_{3}$ and $w.$ See Figures \ref{2dpic1} and
\ref{2dpic2} for cases when the center $c_{123}$ is inside $v_{1}v_{2}v_{3}$
and when it is outside $v_{1}v_{2}v_{3}$.

\begin{proposition}
\label{straightline}The points $c_{123},$ $v_{3},$ and $v_{3}^{\prime}$ lie on
a line. I.e., $\delta v_{3}$ is parallel to $v_{3}-c_{123}.$
\end{proposition}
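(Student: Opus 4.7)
The plan is to pin down $v_1$ and $v_2$ (possible since $\delta\ell_{12}=0$) and compute $\delta v_3$ by projecting onto the directions of the two incident edges. Let $\hat{e}_{13}$ denote the unit vector from $v_1$ to $v_3$ and $\hat{e}_{23}$ the unit vector from $v_2$ to $v_3$. Then differentiating $\ell_{13}=|v_3-v_1|$ and $\ell_{23}=|v_3-v_2|$ gives
\[
\hat{e}_{13}\cdot\delta v_3 = \delta\ell_{13} = d_{31}\,\delta f_3, \qquad \hat{e}_{23}\cdot\delta v_3 = \delta\ell_{23} = d_{32}\,\delta f_3,
\]
by the hypotheses (\ref{lengthvar1})--(\ref{lengthvar3}). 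Since $\hat{e}_{13}$ and $\hat{e}_{23}$ are linearly independent (the triangle is nondegenerate), these two scalar equations determine $\delta v_3$ uniquely.

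Next I would show that the vector $v_3-c_{123}$ satisfies the analogous equations with $\delta f_3$ replaced by $1$. Write
\[
v_3-c_{123} = (v_3-c_{13}) + (c_{13}-c_{123}).
\]
By the definition of the edge center, $v_3-c_{13}=d_{31}\hat{e}_{13}$, so $(v_3-c_{13})\cdot\hat{e}_{13}=d_{31}$. By the construction of $c_{123}$ recalled in the introduction, $c_{13}-c_{123}$ lies along the line through $c_{13}$ perpendicular to $v_1v_3$, and so is orthogonal to $\hat{e}_{13}$. Therefore $(v_3-c_{123})\cdot\hat{e}_{13}=d_{31}$, and symmetrically $(v_3-c_{123})\cdot\hat{e}_{23}=d_{32}$.

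Comparing the two systems, uniqueness forces
\[
\delta v_3 = (v_3-c_{123})\,\delta f_3,
\]
which is exactly the asserted parallelism of $\delta v_3$ with $v_3-c_{123}$, and hence the collinearity of $c_{123}$, $v_3$, $v_3^{\prime}$. The only delicate point is sign conventions: when $c_{123}$ lies outside the triangle (Figure \ref{2dpic2}) some of the $h$'s and $d$'s are negative, but because the identities $v_3-c_{ij}=d_{ji}\hat{e}_{ij}$ and orthogonality of $c_{ij}-c_{123}$ to the edge are stated with the signed quantities, the argument goes through unchanged; this is the main obstacle, and it is resolved simply by trusting the signed-distance formalism set up earlier.
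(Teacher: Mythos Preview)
Your argument is correct and is essentially the same as the paper's: both fix $v_{1},v_{2}$, compute the projections of $\delta v_{3}$ onto the two edge directions from the length variations, verify that $v_{3}-c_{123}$ has the same projections, and conclude by linear independence. The only cosmetic difference is that the paper differentiates $\ell_{13}^{2}$ rather than $\ell_{13}$, and asserts the identities $(v_{3}-c_{123})\cdot\hat{e}_{13}=d_{31}$ directly, whereas you supply the decomposition through $c_{13}$ to justify them.
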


\begin{proof}
Notice that
\[
\delta\left(  \ell_{13}^{2}\right)  =\delta\left[  \left(  v_{3}-v_{1}\right)
\cdot\left(  v_{3}-v_{1}\right)  \right]  =2\left(  v_{3}-v_{1}\right)
\cdot\delta v_{3}%
\]
but also
\[
\delta\left(  \ell_{13}^{2}\right)  =2\ell_{13}d_{31}\delta f_{3},
\]
so
\[
\delta v_{3}\cdot\frac{v_{3}-v_{1}}{\ell_{13}}=d_{31}\delta f_{3}.
\]
Similarly,
\[
\delta v_{3}\cdot\frac{v_{3}-v_{2}}{\ell_{23}}=d_{32}\delta f_{3}.
\]
Similarly, the vector $v_{3}-c_{123}$ satisfies
\begin{align*}
\left(  v_{3}-c_{123}\right)  \cdot\frac{v_{3}}{\ell_{13}} &  =d_{31},\\
\left(  v_{3}-c_{123}\right)  \cdot\frac{v_{3}-v_{2}}{\ell_{23}} &  =d_{32}%
\end{align*}
and so we see that $\delta v_{3}=\left(  v_{3}-c_{123}\right)  \delta f_{3}.$
\end{proof}

This is essentially the same proof given by Thurston \cite{Thurs} and
Marden-Rodin \cite{MR}.

Consider the triangle $v_{3}wv_{3^{\prime}}.$ This is a right triangle with
right angle at $w$ since $v_{1}w$ is a radius of the circle containing $E.$
Since the angle of $E$ with $v_{1}v_{3}$ is also a right angle, together with
Proposition \ref{straightline}, it follows that $v_{3}wv_{3^{\prime}}$ is
similar to the right triangle $c_{123}c_{13}v_{3}$. Using the similar
triangles, we get that%
\begin{equation}
\frac{\ell_{13}\left(  \delta\gamma_{1,23}\right)  }{\delta\ell_{13}}%
=\frac{h_{13,2}}{d_{31}}.\label{key 2d relation}%
\end{equation}
So
\[
\frac{\partial\gamma_{1,23}}{\partial f_{3}}=\frac{h_{13,2}}{\ell_{13}}.
\]
This leads to the following theorem.

\begin{theorem}
\label{2D}For variations of the lengths of a Euclidean triangle of the type
\ref{lengthvar1}-\ref{lengthvar3} (where $\delta f_{3}$ is arbitrary), we
have
\begin{align*}
\frac{\partial\gamma_{1,23}}{\partial f_{3}} &  =\frac{h_{13,2}}{\ell_{13}}\\
\frac{\partial\gamma_{2,13}}{\partial f_{3}} &  =\frac{h_{23,1}}{\ell_{23}}\\
\frac{\partial\gamma_{3,12}}{\partial f_{3}} &  =-\frac{h_{13,2}}{\ell_{13}%
}-\frac{h_{23,1}}{\ell_{23}}.
\end{align*}

\end{theorem}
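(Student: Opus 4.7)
The first identity is essentially already in hand. The discussion just before the statement constructs the similar right triangles $v_3 w v_{3'}$ and $c_{123} c_{13} v_3$, whose similarity rests on Proposition \ref{straightline} (so that $v_3^\prime$ lies on the ray through $c_{123}$ and $v_3$) and on $v_1 w \perp E$ (so that $v_3 w v_{3'}$ has a right angle at $w$). Reading off the ratio of corresponding sides yields (\ref{key 2d relation}), and dividing by $\delta\ell_{13} = d_{31}\,\delta f_3$ gives $\partial\gamma_{1,23}/\partial f_3 = h_{13,2}/\ell_{13}$. So the first line of the theorem is just a restatement of what has already been derived.

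For the second identity, my plan is to run the identical argument with the roles of vertices $1$ and $2$ interchanged. Concretely, draw the arc of radius $\ell_{23}$ centered at $v_2$ through $v_3$; it meets the segment $v_2 v_3^\prime$ at a point $w'$, and $v_3 w' v_{3'}$ is again a right triangle. The crucial input, Proposition \ref{straightline}, is symmetric in the indices $1$ and $2$, so $v_3^\prime$ lies on the ray from $c_{123}$ through $v_3$ regardless of which vertex we view things from, and the auxiliary triangle $v_3 w' v_{3'}$ is similar to $c_{123} c_{23} v_3$. The analogue of (\ref{key 2d relation}) then reads
\[
\frac{\ell_{23}\,(\delta\gamma_{2,13})}{\delta\ell_{23}} = \frac{h_{23,1}}{d_{32}},
\]
and combining this with $\delta\ell_{23} = d_{32}\,\delta f_3$ gives the second formula.

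The third identity I would obtain for free from the angle sum $\gamma_{1,23} + \gamma_{2,13} + \gamma_{3,12} = \pi$, which is constant under any deformation of the triangle. Differentiating gives
\[
\frac{\partial\gamma_{3,12}}{\partial f_3} = -\frac{\partial\gamma_{1,23}}{\partial f_3} - \frac{\partial\gamma_{2,13}}{\partial f_3} = -\frac{h_{13,2}}{\ell_{13}} - \frac{h_{23,1}}{\ell_{23}},
\]
as claimed.

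The only real content is the first two identities, and the single nontrivial step in each is the identification of the similar right triangles — which in turn hinges on Proposition \ref{straightline}. That proposition is the genuine geometric input: it says that the infinitesimal motion of $v_3$ under the constraints (\ref{lengthvar1})--(\ref{lengthvar3}) is radial from the circumcenter-like point $c_{123}$, and this is precisely what aligns $\delta v_3$ with the hypotenuse of the triangle $c_{123} c_{ij} v_3$ whose legs are $h_{ij,k}$ and $d_{3i}$. Given that proposition, the rest is bookkeeping of similar triangles and the angle-sum relation, and there is no further obstacle.
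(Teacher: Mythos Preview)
Your proposal is correct and follows essentially the same approach as the paper: the first formula is the result already derived via the similar-triangles argument preceding the theorem, the second is obtained by the symmetric argument swapping the roles of vertices $1$ and $2$, and the third comes from differentiating the angle sum $\gamma_{1,23}+\gamma_{2,13}+\gamma_{3,12}=\pi$. The paper's own proof is terser (it simply asserts the first two equalities are already proven and invokes the angle sum for the third), but your elaboration of the symmetry step is exactly what is implicit there.
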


\begin{proof}
We have already proven the first two equalities. The last follows from the
fact that in a Euclidean triangle, $\gamma_{1,23}+\gamma_{2,13}+\gamma
_{3,12}=\pi.$
\end{proof}

\begin{remark}
Special cases of this theorem appear in \cite{He}\cite{G1}\cite{G2}\cite{G3}
and less refined versions (where only signs and not explicit values are
computed for the derivatives) appear in many other places, including
\cite{Thurs}\cite{RS}\cite{MR}\cite{Ste}.
\end{remark}

\subsection{Three dimensions\label{section 3d angle vars}}

Now consider a tetrahedron $\left\{  1,2,3,4\right\}  $. Similarly, we will
need variations of the form
\begin{align}
\delta\ell_{12} &  =0,\label{3d length var 1}\\
\delta\ell_{13} &  =0,\label{3d length var 2}\\
\delta\ell_{23} &  =0,\label{3d length var 3}\\
\delta\ell_{14} &  =d_{41}\delta f_{4},\label{3d length var 4}\\
\delta\ell_{24} &  =d_{42}\delta f_{4},\label{3d length var 5}\\
\delta\ell_{34} &  =d_{43}\delta f_{4},\label{3d length var 6}%
\end{align}
where $d$ is a metric on $\left\{  1,2,3,4\right\}  $. For convenience, we
embed the vertices of the tetrahedron as $\left\{  v_{1},v_{2},v_{3}%
,v_{4}\right\}  $ so that it has the correct edge lengths and such that
$v_{1}$ is at the origin, $v_{2}$ is along the positive $x$-axis, $v_{3}$ is
in the $xy$-plane, and $v_{4}$ is above the $xy$-plane. We will need some
additional points. We let $v_{4}^{\prime}$ be the new vertex $4$ gotten by
taking lengths $\ell_{ij}+\delta\ell_{ij}$ (remembering that, for instance,
$\delta\ell_{12}=0$) and embedding this tetrahedron as $v_{1}v_{2}v_{3}%
v_{4}^{\prime},$ with $v_{4}^{\prime}$ above the $xy$-plane. We also need
$v_{4,12}^{\prime}$ which is the point on the plane $v_{1}v_{2}v_{4}^{\prime}$
which makes $v_{1}v_{2}v_{4,12}^{\prime}$ a triangle congruent to $v_{1}%
v_{2}v_{4}.$ Also, we have the point $v_{4,1}^{\prime},$ which is the point on
the line $v_{1}v_{4}^{\prime}$ which is a distance $\ell_{14}$ from $v_{1}.$
See Figure \ref{3dpic1}.%
\begin{figure}
[tb]
\begin{center}
\includegraphics[
natheight=3.212100in,
natwidth=2.688200in,
height=4.3661in,
width=3.7625in
]%
{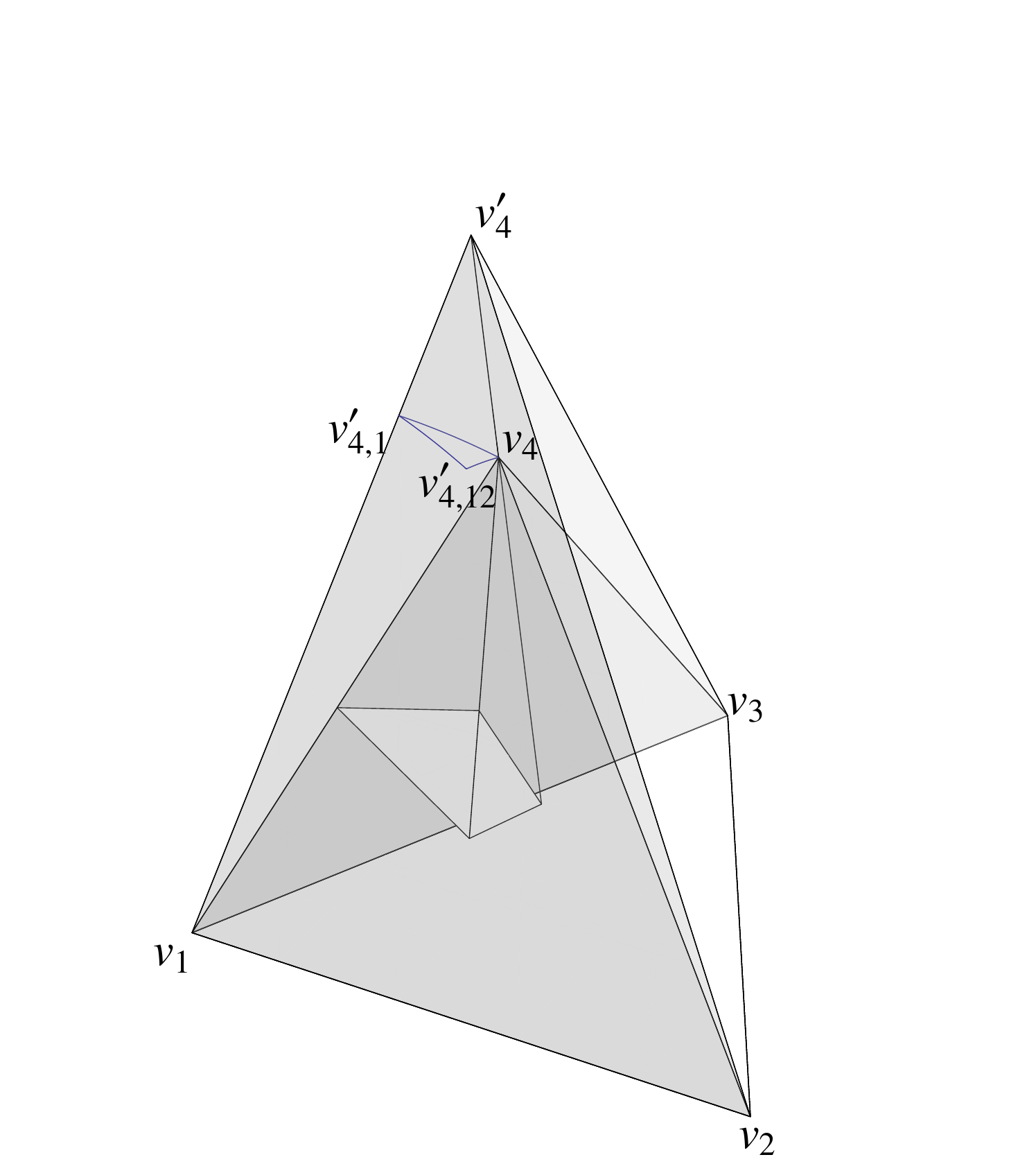}%
\caption{Variation of a tetrahedron.}%
\label{3dpic1}%
\end{center}
\end{figure}

We first observe that the right tetrahedron $v_{4}v_{4,12}^{\prime}%
v_{4,1}^{\prime}v_{4}^{\prime}$ is similar to the tetrahedron $c_{1234}%
c_{124}c_{14}v_{4}.$ This is because $c_{1234},$ $v_{4},$ and $v_{4}^{\prime}$
are colinear (the proof is exactly analogous to the proof of Proposition
\ref{straightline}, and is thus omitted). This implies that
\[
\frac{\left(  \delta\ell_{14}\right)  ^{2}}{d_{41}^{2}}=\frac{\frac{1}%
{2}\left(  \ell_{14}\delta\gamma_{1,24}\right)  \left(  \ell_{14}\sin
\gamma_{1,24}\left(  \delta\beta_{12,34}\right)  \right)  }{\frac{1}%
{2}h_{14,2}h_{124,3}}.
\]
We conclude that
\begin{align*}
\frac{h_{124,3}h_{14,2}}{d_{41}^{2}\sin\gamma_{1,24}\ell_{14}}  &
=\frac{\delta\gamma_{1,24}}{\delta\ell_{14}}\ell_{14}\left(  \frac{\delta
\beta_{12,34}}{\delta\ell_{14}}\right) \\
&  =\frac{h_{14,2}}{d_{41}}\left(  \frac{\delta\beta_{12,34}}{\delta\ell_{14}%
}\right)
\end{align*}
using Theorem \ref{2D}. We thus get%
\begin{equation}
\frac{h_{124,3}}{d_{41}\sin\gamma_{1,24}\ell_{14}}=\frac{\delta\beta_{12,34}%
}{\delta\ell_{14}}. \label{dihedral var fmla}%
\end{equation}

Furthermore, if $\alpha_{1}$ is the solid angle at vertex $v_{1},$ then we get
that $\ell_{14}^{2}\delta\alpha_{1}$ is approximately the sum of the areas of
two triangles on the sphere centered at $v_{1}$ of radius $\ell_{14}.$ One
triangle has vertices $v_{4,1}^{\prime},v_{4},$ and a point on the $x$-axis
which we will call $b.$ See Figure \ref{sphericaltripic}.
\begin{figure}
[tb]
\begin{center}
\includegraphics[
natheight=4.185900in,
natwidth=3.603100in,
height=4.3661in,
width=3.7625in
]%
{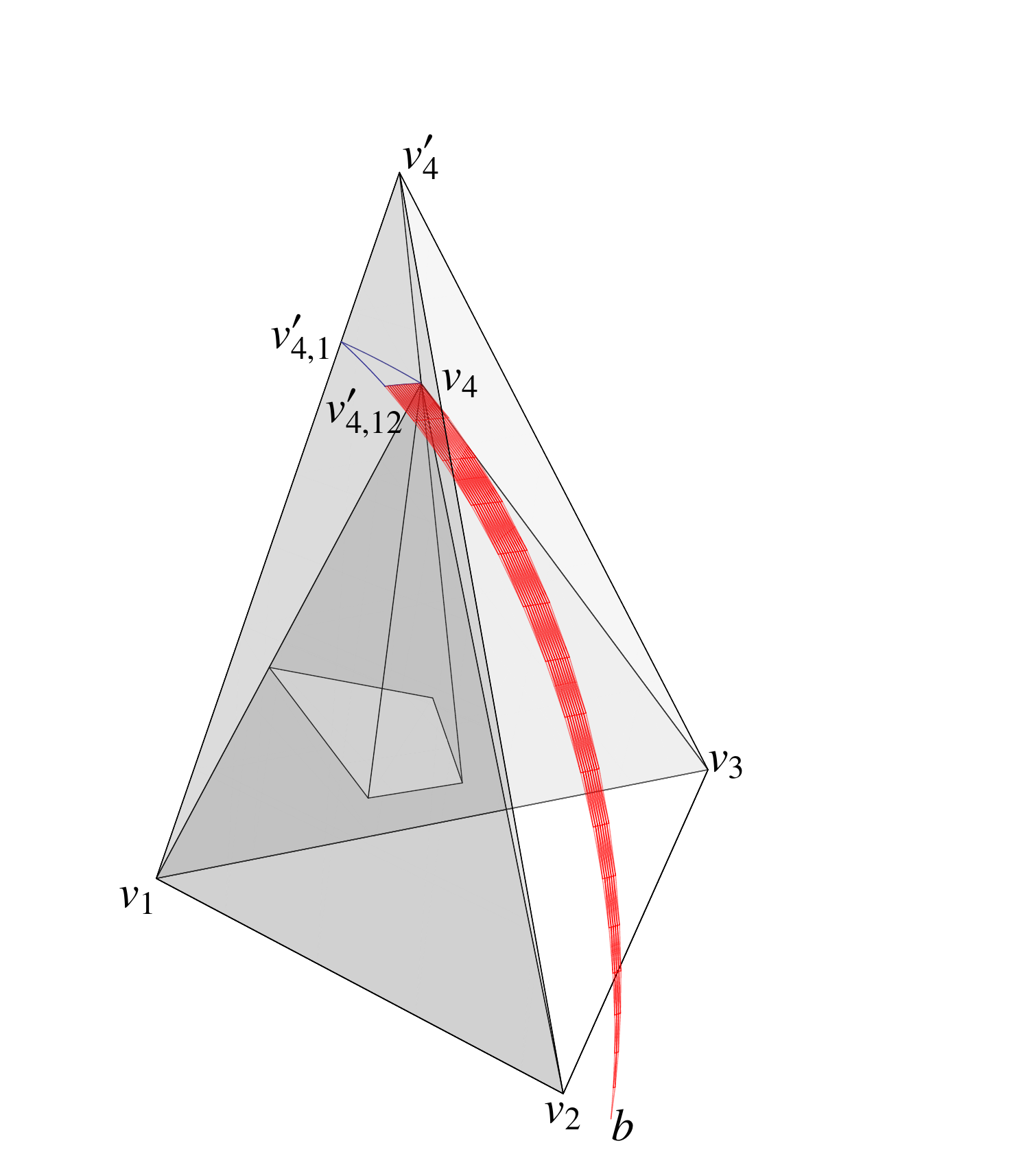}%
\caption{Angle variation setup for a tetrahedron.}%
\label{sphericaltripic}%
\end{center}
\end{figure}
This triangle can be divided into two spherical triangles, $v_{4,1}^{\prime
}v_{4,12}^{\prime}v_{4}$ and $v_{4,12}^{\prime}v_{4}b.$ Each of these
triangles has a right angle at $v_{4,12}^{\prime}.$ Note that the first
triangle has area which vanishes to higher order, so up to first order, the
area is the area of the second triangle. This triangle has a right angle at
$v_{4,12}^{\prime},$ has angle $\delta\beta_{12,34}$ at $b,$ and some other
angle at $v_{4}$, say $\frac{\pi}{2}-\gamma,$ where $\gamma$ is small. It is
easy to see that the side $v_{4,12}^{\prime}b$ in the spherical triangle has
length $\ell_{14}\delta\gamma_{1,24}$ and the side $v_{4,12}^{\prime}v_{4}$
has length $\ell_{14}\sin\gamma_{1,24}\delta\beta_{12,34}$ to first order (the
error is like $\left(  \delta\beta_{12,34}\right)  ^{3}$). We may compute
$\gamma$ since we know by spherical trigonometry that
\[
\cos\left(  \frac{\pi}{2}-\gamma\right)  =\frac{\tan\left(  \delta
\beta_{12,34}\sin\gamma_{1,24}\right)  }{\tan\gamma_{1,24}}.
\]
We look at this asymptotically where $\delta\beta_{12,34}$ is small, and see
that
\[
\gamma=\frac{\delta\beta_{12,34}\sin\gamma_{1,24}}{\tan\gamma_{1,24}}%
=\delta\beta_{12,34}\cos\gamma_{1,24}%
\]
and thus the area of the triangle is
\begin{align*}
&  \ell_{14}^{2}\left(  \delta\beta_{12,34}+\frac{\pi}{2}+\frac{\pi}{2}%
-\delta\beta_{12,34}\cos\gamma_{1,24}-\pi\right)  \\
&  =\ell_{14}^{2}\delta\beta_{12,34}\left(  1-\cos\gamma_{1,24}\right)  .
\end{align*}
Hence we have that
\[
\delta\alpha_{1}=\delta\beta_{12,34}+\delta\beta_{13,24}+\delta\beta_{14,23}%
\]
implies that
\[
\delta\beta_{12,34}\left(  1-\cos\gamma_{1,24}\right)  +\delta\beta
_{13,24}\left(  1-\cos\gamma_{1,34}\right)  =\delta\beta_{12,34}+\delta
\beta_{13,24}+\delta\beta_{14,23}%
\]
or%
\begin{align*}
\delta\beta_{14,23} &  =-\cos\gamma_{1,24}\delta\beta_{12,34}-\cos
\gamma_{1,34}\delta\beta_{13,24}\\
&  =-\cos\gamma_{1,24}\frac{h_{124,3}}{d_{41}\sin\gamma_{1,24}\ell_{14}}%
\delta\ell_{14}-\cos\gamma_{1,34}\frac{h_{134,2}}{d_{41}\sin\gamma_{1,34}%
\ell_{14}}\delta\ell_{14}\\
&  =-\frac{1}{d_{41}\ell_{14}}\left(  \cot\gamma_{1,24}h_{124,3}+\cot
\gamma_{1,34}h_{134,2}\right)  \delta\ell_{14}\\
&  =\frac{1}{d_{14}d_{41}\ell_{14}}\left(  h_{14,2}h_{124,3}+h_{14,3}%
h_{134,2}-\frac{d_{12}h_{124,3}}{\sin\gamma_{1,24}}-\frac{d_{13}h_{134,2}%
}{\sin\gamma_{1,34}}\right)  \delta\ell_{14}.
\end{align*}
Recalling that
\[
\frac{h_{124,3}}{d_{41}\sin\gamma_{1,24}\ell_{14}}=\frac{\delta\beta_{12,34}%
}{\delta\ell_{14}}%
\]
we get%
\begin{align}
&  d_{41}\left(  d_{14}\delta\beta_{14,23}+d_{12}\delta\beta_{12,34}%
+d_{13}\delta\beta_{13,24}\right)  \nonumber\\
&  =\frac{1}{\ell_{14}}\left(  h_{14,2}h_{124,3}+h_{14,3}h_{134,2}\right)
\delta\ell_{14}\nonumber\\
&  =2\frac{A_{14,23}}{\ell_{14}}\delta\ell_{14}=2\frac{A_{14,23}}{\ell_{14}%
}d_{41}\delta f_{4}.\label{dihedraldual1}%
\end{align}
That is,
\[
d_{14}\delta\beta_{14,23}+d_{12}\delta\beta_{12,34}+d_{13}\delta\beta
_{13,24}=2\frac{A_{14,23}}{\ell_{14}}\delta f_{4}.
\]
Furthermore, the Schl\"{a}fli formula implies that in the tetrahedron,
\[
\ell_{12}\delta\beta_{12}+\ell_{13}\delta\beta_{13}+\ell_{14}\delta\beta
_{14}+\ell_{23}\delta\beta_{23}+\ell_{24}\delta\beta_{24}+\ell_{34}\delta
\beta_{34}=0,
\]
and so%
\begin{align*}
0 &  =\left(  d_{14}\delta\beta_{14}+d_{12}\delta\beta_{12}+d_{13}\delta
\beta_{13}\right)  +\left(  d_{21}\delta\beta_{12}+d_{23}\delta\beta
_{23}+d_{24}\delta\beta_{24}\right)  \\
&  \;\;\;\;+\left(  d_{31}\delta\beta_{13}+d_{32}\delta\beta_{23}+d_{34}%
\delta\beta_{34}\right)  +\left(  d_{41}\delta\beta_{14}+d_{42}\delta
\beta_{24}+d_{43}\delta\beta_{34}\right)  .
\end{align*}
The first three terms are all of the form (\ref{dihedraldual1}) and the last
is different (because the lengths of edges around vertex $4$ are changing).
Thus,%
\begin{align*}
\left(  d_{41}\delta\beta_{14}+d_{42}\delta\beta_{24}+d_{43}\delta\beta
_{34}\right)   &  =-2\frac{A_{14,23}}{d_{41}\ell_{14}}\delta\ell_{14}%
-2\frac{A_{24,13}}{d_{42}\ell_{24}}\delta\ell_{24}-2\frac{A_{34,12}}%
{d_{43}\ell_{34}}\delta\ell_{34}\\
&  =\left[  -2\frac{A_{14,23}}{\ell_{14}}-2\frac{A_{24,13}}{\ell_{24}}%
-2\frac{A_{34,12}}{\ell_{34}}\right]  \delta f_{4}.
\end{align*}
We have just proven the following theorem.

\begin{theorem}
\label{3D}Under variations of the form (\ref{3d length var 1}%
)-(\ref{3d length var 6}), we have
\begin{align*}
d_{14}\delta\beta_{14}+d_{12}\delta\beta_{12}+d_{13}\delta\beta_{13}  &
=2\frac{A_{14,23}}{\ell_{14}}\delta f_{4},\\
d_{24}\delta\beta_{24}+d_{23}\delta\beta_{23}+d_{21}\delta\beta_{12}  &
=2\frac{A_{24,13}}{\ell_{24}}\delta f_{4},\\
d_{34}\delta\beta_{34}+d_{32}\delta\beta_{23}+d_{31}\delta\beta_{13}  &
=2\frac{A_{34,12}}{\ell_{34}}\delta f_{4},\\
d_{41}\delta\beta_{14}+d_{42}\delta\beta_{24}+d_{43}\delta\beta_{34}  &
=\left[  -2\frac{A_{14,23}}{\ell_{14}}-2\frac{A_{24,13}}{\ell_{24}}%
-2\frac{A_{34,12}}{\ell_{34}}\right]  \delta f_{4}.
\end{align*}

\end{theorem}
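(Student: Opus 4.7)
The strategy is to mirror the two-dimensional argument from Theorem \ref{2D}: extract the derivative of a single dihedral angle from a similar-tetrahedra comparison, and then close the system using a conservation law at the fixed vertex together with Schl\"{a}fli. First I would prove a three-dimensional analogue of Proposition \ref{straightline}: since $\ell_{12},\ell_{13},\ell_{23}$ are unchanged, only $v_4$ moves, and the three equations $\delta v_4 \cdot (v_4-v_i)/\ell_{i4} = d_{4i}\,\delta f_4$ for $i=1,2,3$ force $\delta v_4$ to be parallel to $v_4 - c_{1234}$. This collinearity makes the infinitesimal right tetrahedron $v_4 v_{4,12}' v_{4,1}' v_4'$ similar to $c_{1234} c_{124} c_{14} v_4$, and comparing the ratios of the two legs (using Theorem \ref{2D} to recognize $\ell_{14}\,\delta\gamma_{1,24}/\delta\ell_{14} = h_{14,2}/d_{41}$) yields
\[
\frac{\delta\beta_{12,34}}{\delta\ell_{14}} = \frac{h_{124,3}}{d_{41}\sin\gamma_{1,24}\,\ell_{14}},
\]
together with the analogous formula for $\delta\beta_{13,24}/\delta\ell_{14}$.

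To capture the third dihedral at vertex $1$, I would use spherical Gauss--Bonnet on a small sphere centered at $v_1$: the solid angle $\alpha_1$ is the angular excess of the geodesic triangle there, so $\delta\alpha_1 = \delta\beta_{12,34} + \delta\beta_{13,24} + \delta\beta_{14,23}$. On the other hand, since only $v_4$ moves, $\delta\alpha_1$ can be read off directly as the sum of the areas of two thin spherical right-triangles whose legs are $\ell_{14}\,\delta\gamma_{1,24}$ and $\ell_{14}\sin\gamma_{1,24}\,\delta\beta_{12,34}$ (and the analogous quantities at face $134$). A short spherical-trigonometry computation gives this area as $\delta\beta_{12,34}(1-\cos\gamma_{1,24}) + \delta\beta_{13,24}(1-\cos\gamma_{1,34})$. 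Equating the two expressions solves for $\delta\beta_{14,23}$ as a linear combination of $\delta\beta_{12,34}$ and $\delta\beta_{13,24}$, and substituting the formulas above followed by the identities $h_{1j,k} = (d_{1k}-d_{1j}\cos\gamma_{1,jk})/\sin\gamma_{1,jk}$ lets me collect the three dihedral variations with coefficients $d_{12},d_{13},d_{14}$ on the left and the combination $h_{14,2}h_{124,3}+h_{14,3}h_{134,2}=2A_{14,23}$ on the right, which is exactly the first identity.

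The second and third identities are not independent calculations: the same derivation with the base triangle rotated so that the fixed vertex is $2$ or $3$ (which is still a legitimate relabeling of the same variation, since $v_1,v_2,v_3$ play symmetric roles in the hypotheses \eqref{3d length var 1}--\eqref{3d length var 3}) produces the formulas with $A_{24,13}/\ell_{24}$ and $A_{34,12}/\ell_{34}$ on the right. For the fourth identity, I would invoke Schl\"{a}fli's formula $\sum_{ij}\ell_{ij}\,\delta\beta_{ij} = 0$, write each $\ell_{ij} = d_{ij}+d_{ji}$, and group the six terms into four triples according to the vertex from which the $d$-factor is based. Three of these triples are precisely the left-hand sides of the identities already proved, so the remaining triple at vertex $4$ is forced to equal the negative of the sum, giving the stated expression with the three $A/\ell$ terms entering with a sign flip.

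The main obstacle is the algebraic collapse in the middle step: turning the trigonometric combination $-\cos\gamma_{1,24}\,\delta\beta_{12,34} - \cos\gamma_{1,34}\,\delta\beta_{13,24}$, after substituting the similar-triangles expressions, into the tidy $2A_{14,23}/\ell_{14}$. Every other step is either a direct geometric observation (the collinearity, the spherical excess, the Schl\"{a}fli identity) or a relabeling, so the only real computation is verifying that the heights $h_{14,2}, h_{124,3}, h_{14,3}, h_{134,2}$ assemble into the signed dual area $A_{14,23}$; careful sign tracking, especially when $c_{1234}$ sits outside the tetrahedron so some of the $h$'s are negative, will be the delicate part.
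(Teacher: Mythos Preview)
Your proposal is correct and follows essentially the same route as the paper: the collinearity $\delta v_4 \parallel (v_4 - c_{1234})$, the similar-tetrahedra comparison yielding $\delta\beta_{12,34}/\delta\ell_{14} = h_{124,3}/(d_{41}\sin\gamma_{1,24}\,\ell_{14})$, the spherical-excess computation on the link of $v_1$ to extract $\delta\beta_{14,23}$, the algebraic collapse into $2A_{14,23}/\ell_{14}$, and finally Schl\"{a}fli to close the vertex-$4$ identity are exactly the steps the paper carries out. The paper likewise obtains the second and third identities by the implicit symmetry among $v_1,v_2,v_3$, so there is nothing to add.
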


We actually derived a finer result, with explicit computation of the
variations of individual dihedral angles. However, the result in this form is
more compactly stated and all we will use in the remainder of this paper.
Also, this result was derived in \cite{G1} for the specific case of
sphere-packing configurations of a tetrahedron. Less precise results along
these lines were examined in the sphere-packing case in \cite{CR}\cite{Riv2}
as well. We will go into more detail later.

\section{Curvature variations\label{section curvature variations}}

\subsection{Two dimensions}

Discrete curvature in two dimensions has been well studied, with curvature as
in Definition \ref{definition 2d curvature}. Theorem \ref{2D} has the
following implication.

\begin{theorem}
\label{curv derivative 2D}Let $f\left(  t\right)  $ be a conformal variation
of $\left(  M^{2},T,d\right)  $ Then for $t=0,$
\begin{equation}
\frac{dK_{i}}{dt}=-\sum_{j}\frac{\ell_{ij}^{\ast}}{\ell_{ij}}\left(
\frac{df_{j}}{dt}-\frac{df_{i}}{dt}\right)  .\label{2d curvature time deriv}%
\end{equation}

\end{theorem}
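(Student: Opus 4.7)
The plan is to differentiate the definition $K_i = 2\pi - \sum_{j,k} \gamma_{i,jk}$, where the sum is over triangles containing $i$, and apply Theorem \ref{2D} triangle by triangle.

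First, I would observe that because the conformal structure decouples things in a nice way (namely $\partial d_{ij}/\partial f_k = 0$ for $k \neq i,j$), the only length variations within a given triangle $\{i,j,k\}$ come from the hypotheses of Theorem \ref{2D}: each length $\ell_{ab}$ changes by $d_{ab}\,df_a/dt + d_{ba}\,df_b/dt$. By relabeling the triangle and using Theorem \ref{2D} three times (once with the role of "vertex 3" played by $i$, once by $j$, once by $k$), I would write
\begin{equation*}
\frac{d\gamma_{i,jk}}{dt} = \frac{h_{ij,k}}{\ell_{ij}}\left(\frac{df_j}{dt}-\frac{df_i}{dt}\right) + \frac{h_{ik,j}}{\ell_{ik}}\left(\frac{df_k}{dt}-\frac{df_i}{dt}\right),
\end{equation*}
where the coefficient of $df_i/dt$ comes out right because $\gamma_{i,jk}+\gamma_{j,ik}+\gamma_{k,ij}=\pi$ makes the three partials in each variable sum to zero (the third identity in Theorem \ref{2D}). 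So essentially one only needs the first two identities of Theorem \ref{2D} plus the angle-sum constraint.

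Next, I would sum over all triangles at vertex $i$ to get $dK_i/dt = -\sum_{\{i,j,k\}} d\gamma_{i,jk}/dt$. I would then reorganize the double sum by edges: each edge $\{i,j\}$ at $i$ is shared by exactly two triangles $\{i,j,k\}$ and $\{i,j,\ell\}$, and the coefficient of $(df_j/dt - df_i/dt)$ in the reorganized sum is $\frac{1}{\ell_{ij}}(h_{ij,k}+h_{ij,\ell})$. Invoking Definition \ref{definition 2d dual length}, this is exactly $\ell_{ij}^{\ast}/\ell_{ij}$, yielding the claimed formula.

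The only real obstacle is bookkeeping: I need to be careful that the three identities of Theorem \ref{2D} (which are stated for a single triangle with a specified "moving vertex") are applied correctly when all three of $f_i, f_j, f_k$ vary, and that when reindexing the sum of $d\gamma_{i,jk}/dt$ as a sum over edges emanating from $i$, each dual-length summand $h_{ij,k}+h_{ij,\ell}$ assembles from the correct pair of adjacent triangles. No new analytic input beyond Theorem \ref{2D} and Definition \ref{definition 2d dual length} is required.
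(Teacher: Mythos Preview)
Your proposal is correct and follows essentially the same route as the paper: differentiate $K_i=2\pi-\sum_{j,k}\gamma_{i,jk}$, apply Theorem~\ref{2D} to obtain $\frac{d\gamma_{i,jk}}{dt}=\frac{h_{ij,k}}{\ell_{ij}}\bigl(\frac{df_j}{dt}-\frac{df_i}{dt}\bigr)+\frac{h_{ik,j}}{\ell_{ik}}\bigl(\frac{df_k}{dt}-\frac{df_i}{dt}\bigr)$, and then regroup the triangle sum into an edge sum so that the two contributions $h_{ij,k}+h_{ij,\ell}$ assemble into $\ell_{ij}^{\ast}$ via Definition~\ref{definition 2d dual length}. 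The paper's proof is slightly more terse but uses exactly the same ingredients and bookkeeping you describe.
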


\begin{proof}
We compute
\begin{align*}
\frac{dK_{i}}{dt} &  =\frac{d}{dt}\left(  2\pi-\sum_{j,k}\gamma_{i,jk}\right)
\\
&  =-\sum_{j,k}\left(  \frac{\partial\gamma_{i,jk}}{\partial f_{i}}%
\frac{df_{i}}{dt}+\frac{\partial\gamma_{i,jk}}{\partial f_{j}}\frac{df_{j}%
}{dt}+\frac{\partial\gamma_{i,jk}}{\partial f_{k}}\frac{df_{k}}{dt}\right)  \\
&  =-\sum_{j,k}\left[  \frac{h_{ij,k}}{\ell_{ij}}\left(  \frac{df_{j}}%
{dt}-\frac{df_{i}}{dt}\right)  +\frac{h_{ik,j}}{\ell_{ik}}\left(  \frac
{df_{k}}{dt}-\frac{df_{i}}{dt}\right)  \right]  ,
\end{align*}
which implies the result.
\end{proof}

\begin{remark}
We will express many of the variation results as in Theorem
\ref{curv derivative 2D} instead of in terms of $\frac{\partial K_{i}%
}{\partial f_{j}}$ in order to emphasize the presence of the Laplacian. Notice
that the formula (\ref{2d curvature time deriv}) has the form
\[
\frac{dK_{i}}{dt}=-\left(  \triangle\frac{df}{dt}\right)  _{i}%
\]
for an appropriate definition of the Laplacian $\triangle$. We will comment
more on this in Section \ref{section laplace}.
\end{remark}

The formulas from Theorem \ref{2D} also imply that the curvatures are
variational giving the proof of Theorem \ref{2d variational}.

\begin{proof}
[Proof of Theorem \ref{2d variational}]$F$ will be defined as
\[
F\left(  f\right)  =\int_{f_{0}}^{f}\omega
\]
for the $1$-form
\[
\omega=\sum K_{i}df_{i}.
\]
To ensure that the integral is independent of path, we need that $\omega$ is
closed. Since we are in a simply connected domain, we need only check that
\[
\frac{\partial K_{i}}{\partial f_{j}}=\frac{\partial K_{j}}{\partial f_{i}}%
\]
for $i\neq j.$ We can compute these derivative explicitly, and they are
\[
\frac{\partial K_{i}}{\partial f_{j}}=-\frac{\ell_{ij}^{\ast}}{\ell_{ij}%
}=\frac{\partial K_{j}}{\partial f_{i}}%
\]
if $i$ and $j$ share an edge and zero otherwise.
\end{proof}

This formulation of the prescribed curvature problem in a variational
framework has been studied by many people. See, for instance, \cite{DGL}
\cite{Riv1} \cite{CdV} \cite{CL} \cite{Spr} \cite{Guo}, many of which derive
the functional in precisely the same way. Certain conformal variations in the
discrete setting have been proposed by Ro\v{c}ek and Williams in the context
of Regge calculus \cite{RW}, Thurston in the setting of circle patterns
\cite{Thurs} (see also other work on circle packing, e.g., \cite{Ste}), and
Luo \cite{Luo1} (see also \cite{SSP}). Our current setting takes each of these
definitions and proofs as special cases (see Section
\ref{section special cases}). In many of these papers it is shown that the
largest \textquotedblleft reasonable\textquotedblright\ domain for the $f$'s
is simply connected. The advantage of Theorem \ref{2d variational} is that it
works for a more general class of conformal variations. In addition, we have a
geometric description of the derivatives $\frac{\partial K_{i}}{\partial
f_{j}}$ which is absent from most of these previous works.

\subsection{Three dimensions and the Einstein-Hilbert-Regge
functional\label{section 3d curv vars}}

We could follow a similar method to that in Theorem \ref{2d variational} to
prove that there are three-dimensional curvatures which are variational.
However, we will present this fact in a different way by using the
Einstein-Hilbert-Regge functional from Definition \ref{definition EHR}.

Recall Definition \ref{definition 3d scalar curvature} for scalar curvatures
in dimension three. The definition is first motivated by seeing how the
Schl\"{a}fli formula decomposes as sums around vertices. The Schl\"{a}fli
formula on a tetrahedron is
\[
\sum_{i,j}\ell_{ij}\delta\beta_{ij}=0,
\]
where the sum is over all edges $\left\{  i,j\right\}  $ in the tetrahedron.
It can be written as
\begin{align*}
0 &  =\left(  d_{12}+d_{21}\right)  \delta\beta_{12}+\left(  d_{13}%
+d_{31}\right)  \delta\beta_{13}+\left(  d_{14}+d_{41}\right)  \delta
\beta_{14}\\
&  \;\;\;+\left(  d_{23}+d_{32}\right)  \delta\beta_{23}+\left(  d_{24}%
+d_{42}\right)  \delta\beta_{24}+\left(  d_{34}+d_{43}\right)  \delta
\beta_{34}\\
&  =\left(  d_{12}\delta\beta_{12}+d_{13}\delta\beta_{13}+d_{14}\delta
\beta_{14}\right)  +\left(  d_{21}\delta\beta_{12}+d_{23}\delta\beta
_{23}+d_{24}\delta\beta_{24}\right)  \\
&  \;\;\;+\left(  d_{31}\delta\beta_{13}+d_{32}\delta\beta_{23}+d_{34}%
\delta\beta_{34}\right)  +\left(  d_{41}\delta\beta_{14}+d_{42}\delta
\beta_{24}+d_{43}\delta\beta_{34}\right)  ,
\end{align*}
giving the vertex breakdown motivating the curvature formula. The Schl\"{a}fli
formula allows an easy computation of first derivatives of the
Einstein-Hilbert-Regge functional on a triangulation of a closed manifold,
giving%
\begin{align}
\frac{\partial}{\partial f_{i}}\mathcal{EHR}\left(  T,\ell\left(  f\right)
\right)   &  =\frac{\partial}{\partial f_{i}}\left[  \sum_{i,j}\left(
2\pi-\sum_{k,\ell}\beta_{ij,k\ell}\right)  \ell_{ij}\right]  \nonumber\\
&  =\sum_{j}\left(  2\pi-\sum_{k,\ell}\beta_{ij,k\ell}\right)  d_{ij}%
\label{conformal variation of EHR}\\
&  =K_{i}.\nonumber
\end{align}

To compute the second variation, we need the variation of $K_{i}.$ Using
Theorem \ref{3D} we can express the derivatives of curvature.

\begin{theorem}
\label{theorem 3d curvature variation}Let $f\left(  t\right)  $ be a conformal
variation of $\left(  M^{3},T,d\right)  .$ Then,
\begin{align}
\frac{dK_{i}}{dt} &  =-2\sum_{j\neq i}\frac{\ell_{ij}^{\ast}}{\ell_{ij}%
}\left(  \frac{df_{j}}{dt}-\frac{df_{i}}{dt}\right)  +\sum_{j\neq i}\left(
2\pi-\sum_{k,\ell}\beta_{ij,k\ell}\right)  \frac{d}{dt}d_{ij}%
\label{curvature variation fmla 1}\\
&  =-\sum_{j\neq i}\left(  2\frac{\ell_{ij}^{\ast}}{\ell_{ij}}-\frac{q_{ij}%
}{\ell_{ij}}K_{ij}\right)  \left(  \frac{df_{j}}{dt}-\frac{df_{i}}{dt}\right)
+K_{i}\frac{df_{i}}{dt},\nonumber
\end{align}
where
\[
q_{ij}=\frac{\partial d_{ij}}{\partial f_{j}}=\frac{\partial d_{ji}}{\partial
f_{i}}.
\]

\end{theorem}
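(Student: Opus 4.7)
The plan is to differentiate $K_i = \sum_j\left(2\pi-\sum_{k,\ell}\beta_{ij,k\ell}\right)d_{ij}$ by the product rule, obtaining two pieces:
\begin{equation*}
\frac{dK_i}{dt}=-\sum_{j\neq i}d_{ij}\sum_{k,\ell}\frac{d\beta_{ij,k\ell}}{dt}+\sum_{j\neq i}\left(2\pi-\sum_{k,\ell}\beta_{ij,k\ell}\right)\frac{dd_{ij}}{dt}.
\end{equation*}
The first sum is the new work; the second is a simple application of the conformal structure.

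For the dihedral-angle term, I would fix a tetrahedron $\{i,j,k,\ell\}$ containing $i$ and extract from it the local contribution
\[
\Sigma_{ijk\ell}:=d_{ij}\frac{d\beta_{ij,k\ell}}{dt}+d_{ik}\frac{d\beta_{ik,j\ell}}{dt}+d_{i\ell}\frac{d\beta_{i\ell,jk}}{dt}.
\]
I would then decompose $d/dt=\sum_m(df_m/dt)\partial_{f_m}$ over the four vertices $m\in\{i,j,k,\ell\}$, so that $\Sigma_{ijk\ell}$ becomes a sum of four contributions, each corresponding to varying only one of the four $f_m$. For each such one-variable variation the length changes are exactly those of (\ref{3d length var 1})--(\ref{3d length var 6}) (with the varied vertex playing the role of vertex $4$), so Theorem \ref{3D} applies directly. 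The crucial observation is that the three weighted dihedral derivatives appearing in $\Sigma_{ijk\ell}$ match the combinations in Theorem \ref{3D}: when $f_i$ varies this is the ``fourth'' equation in the theorem (yielding $-2\sum_{v}A_{iv,\cdot}/\ell_{iv}$), while when $f_j,f_k,$ or $f_\ell$ varies this is the ``first,'' ``second,'' or ``third'' equation evaluated at vertex $i$ (yielding $2A_{i\cdot,\cdot\cdot}/\ell_{i\cdot}$). Adding and grouping by neighbor yields
\[
\Sigma_{ijk\ell}=2\sum_{v\in\{j,k,\ell\}}\frac{A_{iv,\cdot\cdot}}{\ell_{iv}}\left(\frac{df_v}{dt}-\frac{df_i}{dt}\right).
\]
Summing over all tetrahedra containing $i$ and using Definition \ref{definition 3d dual length}, $\sum_{k,\ell}A_{ij,k\ell}=\ell_{ij}^\ast$, I obtain the angle contribution $2\sum_{j\neq i}(\ell_{ij}^\ast/\ell_{ij})(df_j/dt-df_i/dt)$, giving the first form (\ref{curvature variation fmla 1}).

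For the second piece I would use the conformal structure axioms: $\partial d_{ij}/\partial f_m=0$ if $m\notin\{i,j\}$, so
\[
\frac{dd_{ij}}{dt}=\frac{\partial d_{ij}}{\partial f_i}\frac{df_i}{dt}+q_{ij}\frac{df_j}{dt}.
\]
Since $\partial\ell_{ij}/\partial f_i=d_{ij}$ and $\partial d_{ji}/\partial f_i=q_{ij}$ (the latter being the symmetry defining $q_{ij}$), we have $\partial d_{ij}/\partial f_i=d_{ij}-q_{ij}$, and so
\[
\frac{dd_{ij}}{dt}=d_{ij}\frac{df_i}{dt}+q_{ij}\left(\frac{df_j}{dt}-\frac{df_i}{dt}\right).
\]
Multiplying by $2\pi-\sum_{k,\ell}\beta_{ij,k\ell}=K_{ij}/\ell_{ij}$ and summing over $j$ produces $K_i(df_i/dt)+\sum_{j\neq i}(q_{ij}K_{ij}/\ell_{ij})(df_j/dt-df_i/dt)$. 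Combining with the angle piece yields the second, compact form.

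The main obstacle is the bookkeeping in the angle step: Theorem \ref{3D} does not give individual dihedral-angle derivatives but only specific weighted sums, so one must verify that the three weighted derivatives appearing naturally in $K_i$ are precisely those four combinations supplied by Theorem \ref{3D} when the four vertices take turns being the ``varying'' vertex. Once this matching is made explicit, the collapse of $\sum_{k,\ell}A_{ij,k\ell}$ to $\ell_{ij}^\ast$ is automatic and the remaining algebra to pass from the first to the second form is straightforward.
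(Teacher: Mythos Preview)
Your proposal is correct and follows essentially the same route as the paper: differentiate $K_i$ by the product rule, handle the dihedral-angle piece tetrahedron-by-tetrahedron via Theorem \ref{3D}, collapse the resulting $A_{ij,k\ell}$'s into $\ell_{ij}^{\ast}$, and then treat the $\frac{d}{dt}d_{ij}$ piece using the conformal-structure identities $\frac{\partial d_{ij}}{\partial f_i}=d_{ij}-q_{ij}$ and the mixed-partial symmetry giving $q_{ij}=q_{ji}$. If anything, your write-up is more explicit than the paper's about how the four equations of Theorem \ref{3D} are matched to the four one-vertex variations; the paper simply records the outcome of that bookkeeping in its first displayed line.
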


We note that the first term in (\ref{curvature variation fmla 1}) is the
Laplacian operator studied by the author in \cite{G3}.

\begin{proof}
We compute%
\begin{align*}
\frac{d}{dt}K_{i} &  =2\pi\sum_{j}\frac{d}{dt}d_{ij}-2\sum_{j\neq i}\frac
{\ell_{ij}^{\ast}}{\ell_{ij}}\left(  \frac{df_{j}}{dt}-\frac{df_{i}}%
{dt}\right)  \\
&  \;\;\;-\sum_{j,k,\ell}\left(  \beta_{ij,k\ell}\frac{d}{dt}d_{ij}%
+\beta_{ik,j\ell}\frac{d}{dt}d_{ik}+\beta_{i\ell,jk}\frac{d}{dt}d_{i\ell
}\right)  \\
&  =-2\sum_{j\neq i}\frac{\ell_{ij}^{\ast}}{\ell_{ij}}\left(  \frac{df_{j}%
}{dt}-\frac{df_{i}}{dt}\right)  +\sum_{j\neq i}\left(  2\pi-\sum_{k,\ell}%
\beta_{ij,k\ell}\right)  \frac{d}{dt}d_{ij}.
\end{align*}
Furthermore, if there is a conformal structure, then
\[
\frac{\partial d_{ij}}{\partial f_{j}}=\frac{\partial}{\partial f_{j}}%
\frac{\partial\ell_{ij}}{\partial f_{i}}=\frac{\partial}{\partial f_{i}}%
\frac{\partial\ell_{ij}}{\partial f_{j}}=\frac{\partial d_{ji}}{\partial
f_{i}}.
\]
Thus
\[
q_{ij}=\frac{\partial d_{ij}}{\partial f_{j}}=\frac{\partial d_{ji}}{\partial
f_{i}}=\frac{\partial^{2}\ell_{ij}}{\partial f_{i}\partial f_{j}}%
\]
is symmetric, i.e., $q_{ij}=q_{ji}.$ This also implies that
\begin{align*}
\frac{\partial d_{ij}}{\partial f_{i}} &  =\frac{\partial\ell_{ij}}{\partial
f_{i}}-\frac{\partial d_{ji}}{\partial f_{i}}\\
&  =d_{ij}-q_{ij}.
\end{align*}
Thus
\[
\frac{d}{dt}d_{ij}=\left(  d_{ij}-q_{ij}\right)  \frac{df_{i}}{dt}+q_{ij}%
\frac{df_{j}}{dt}.
\]
The result follows.
\end{proof}

We can compute the variations of the Einstein-Hilbert-Regge functional.

\begin{proof}
[Proof of Theorem \ref{theorem conformal 3d}]Using
(\ref{conformal variation of EHR}), we see immediately that
\[
\frac{d}{dt}\mathcal{EHR}\left(  M,T,\ell\left(  f\left(  t\right)  \right)
\right)  =\sum_{i\in V}K_{i}\frac{df_{i}}{dt}.
\]
Using Theorem \ref{theorem 3d curvature variation}, we compute that
\begin{align*}
\frac{d^{2}}{dt^{2}}\mathcal{EHR}\left(  M,T,\ell\left(  f\left(  t\right)
\right)  \right)   &  =\sum_{i}\frac{dK_{i}}{dt}\frac{df_{i}}{dt}+K_{i}%
\frac{d^{2}f_{i}}{dt^{2}}\\
&  =-\sum_{i}\sum_{j\neq i}\left(  2\frac{\ell_{ij}^{\ast}}{\ell_{ij}}%
-\frac{q_{ij}}{\ell_{ij}}K_{ij}\right)  \left(  \frac{df_{j}}{dt}-\frac
{df_{i}}{dt}\right)  \frac{df_{i}}{dt}\\
&  \;\;\;\;\;\;\;+\sum_{i}K_{i}\left[  \left(  \frac{df_{i}}{dt}\right)
^{2}+\frac{d^{2}f_{i}}{dt^{2}}\right]  \\
&  =\sum_{i}\sum_{j\neq i}\left(  \frac{\ell_{ij}^{\ast}}{\ell_{ij}}%
-\frac{q_{ij}}{2\ell_{ij}}K_{ij}\right)  \left(  \frac{df_{j}}{dt}%
-\frac{df_{i}}{dt}\right)  ^{2}\\
&  \;\;\;\;\;\;\;+\sum_{i}K_{i}\left[  \left(  \frac{df_{i}}{dt}\right)
^{2}+\frac{d^{2}f_{i}}{dt^{2}}\right]  .
\end{align*}
The rest of the theorem follows immediately from
(\ref{conformal variation of EHR}) and Regge's variation formula
(\ref{regge variation}).
\end{proof}

In order to get better control of $q_{ij}$, we will look at special conformal
structures in Section \ref{section special cases}.

Finally we can complete the proof of Corollary
\ref{corollary volume variation}.

\begin{proof}
[Proof of Corollary \ref{corollary volume variation}]We see from Figure
\ref{3dpic1} that we must have that if we fix $f_{1},f_{2},f_{3}$ and let
$f_{4}$ vary, then
\begin{align*}
\delta V_{1234} &  =\frac{1}{3}A_{124}\ell_{14}\sin\gamma_{1,24}\delta
\beta_{12,34}+\frac{1}{3}A_{134}\ell_{14}\sin\gamma_{1,34}\delta\beta
_{13,24}\\
&  \;\;\;\;\;\;+\frac{1}{3}A_{234}\ell_{34}\sin\gamma_{3,24}\delta
\beta_{23,14}\\
&  =\frac{1}{3}\left(  A_{124}h_{124,3}+A_{134}h_{134,2}+A_{234}%
h_{234,1}\right)
\end{align*}
by (\ref{dihedral var fmla}), which implies that
\[
\frac{\partial}{\partial f_{i}}\mathcal{V}\left(  T,\ell\left(  f\right)
\right)  =V_{i}%
\]
where
\[
V_{i}=\frac{1}{3}\sum_{j,k,\ell}h_{ijk,\ell}A_{ijk}%
\]
and the sum is over all tetrahedra containing $i$ and all faces in those
tetrahedra containing $i.$
\end{proof}

\section{Examples of conformal structures\label{section special cases}}

In this section we place previously studied geometric structures into the
framework of conformal structures.

\subsection{Circle and sphere packing\label{section sphere packing}}

The case of circle packing and sphere packing is when edge lengths arise from
spheres centered at the vertices which are externally tangent to each other.
In this case, there are positive weights $r_{i}$ corresponding to the radii
and $\ell_{ij}=r_{i}+r_{j}$. In two dimensions, circle packings have been
considered in a number of contexts; see Stephenson's monograph \cite{Ste} for
an overview. In three dimensions, this case was considered by Cooper-Rivin
\cite{CR}. They noticed, in particular, that for a sphere packing, one can
rewrite the Schl\"{a}fli formula in the following way:%

\begin{align*}
0  &  =\left(  r_{1}+r_{2}\right)  \delta\beta_{12}+\left(  r_{1}%
+r_{3}\right)  \delta\beta_{13}+\left(  r_{1}+r_{4}\right)  \delta\beta_{14}\\
&  \;\;\;+\left(  r_{2}+r_{3}\right)  \delta\beta_{23}+\left(  r_{2}%
+r_{4}\right)  \delta\beta_{24}+\left(  r_{3}+r_{4}\right)  \delta\beta_{34}\\
&  =r_{1}\left(  \delta\beta_{12}+\delta\beta_{13}+\delta\beta_{14}\right)
+r_{2}\left(  \delta\beta_{12}+\delta\beta_{23}+\delta\beta_{24}\right) \\
&  \;\;\;+r_{3}\left(  \delta\beta_{13}+\delta\beta_{23}+\delta\beta
_{34}\right)  +r_{4}\left(  \delta\beta_{14}+\delta\beta_{24}+\delta\beta
_{34}\right) \\
&  =r_{1}\delta\alpha_{1}+r_{2}\delta\alpha_{2}+r_{3}\delta\alpha_{3}%
+r_{4}\delta\alpha_{4},
\end{align*}
where $\alpha_{i}$ is the solid angle at vertex $i,$ and thus $\delta
\alpha_{1}=\delta\left(  \beta_{12}+\beta_{13}+\beta_{14}-\pi\right)  =$
$\delta\beta_{12}+\delta\beta_{13}+\delta\beta_{14}.$ They used this to
motivate the definition of scalar curvature as $4\pi-\sum\alpha_{i}$ where the
sum is over all tetrahedra containing $i$ as a vertex. From our setting, we
would define the scalar curvature measure instead as%

\[
K_{i}=\left(  4\pi-\sum_{j,k,\ell}\alpha_{i,jk\ell}\right)  r_{i}=\sum
_{j}\left(  2\pi-\sum_{k,\ell}\beta_{ij,k\ell}\right)  r_{i},
\]
where in the right side, the first sum is over all edges incident on $i$ and
the second sum is the sum over all tetrahedra containing $\left\{
i,j\right\}  $ as an edge. The second equality can be easily derived using the
Euler characteristic and area formula of the sphere centered at vertex $i.$

To match this to our setting, we see that we must take $f_{i}=\log r_{i}$ and
$d_{ij}=r_{i},$ since
\begin{align*}
\ell_{ij}  &  =r_{i}+r_{j}\\
\frac{\partial\ell_{ij}}{\partial r_{i}}r_{i}  &  =r_{i}.
\end{align*}
Thus we have the following conformal structure.

\begin{definition}
The \emph{circle/sphere packing conformal structure,} $\mathcal{C}^{P}\left(
M,T\right)  ,$ is the map defining
\[
d_{ij}=e^{f_{i}}%
\]
for every oriented edge in $E_{+}\left(  T\right)  $ restricted to an
appropriate domain of $f\in V^{\ast}\left(  T\right)  .$
\end{definition}

In two dimensions, the triangle inequality is automatically satisfied, and so
the domain is all of $V^{\ast}\left(  T\right)  .$ In three dimensions, there
is an additional condition that the square volumes of three-dimensional
simplices (as defined by the Cayley-Menger determinant formula) are positive.
This is discussed in some detail in \cite{G1} \cite{G2}.

We see that the formulas in Section \ref{section 3d angle vars} correspond to
\[
\frac{\partial\alpha_{1}}{\partial r_{4}}r_{1}r_{4}=2\frac{A_{14,23}}%
{\ell_{14}}.
\]
This is the same formula derived by the author in \cite{G1}.

\subsection{Fixed intersection angles/inversive distance}

There is a more general case of circles or spheres with fixed intersection
angles, originally considered by Thurston \cite{Thurs}. Here we parametrize
lengths by two parameters, radii $r_{i}$ and inversive distances $\eta_{ij}$.
The inversive distance (see, for instance, \cite{Guo}) is like the cosine of
the supplement of the intersection angle, defined so that
\[
\ell_{ij}^{2}=r_{i}^{2}+r_{j}^{2}+2r_{i}r_{j}\eta_{ij}.
\]
We will use this formula to parametrize the lengths by the radii $r_{i}$ with
inversive distances fixed. It essentially corresponds to having circles at the
vertices of radius $r_{i}$ and intersecting at angle $\arccos\left(
-\eta_{ij}\right)  .$ If $\eta_{ij}$ is not between $-1$ and $1,$ then the
circles may not intersect, but this is not a problem for the theory. There is
always a circle orthogonal to these circles, and we take the center of the
triangle to be the center of this orthocircle. (Note, it is possible that this
circle does not have real radius, but the center is still well defined using
the algebra of circles given in \cite{Ped}.) We then find that
\begin{equation}
d_{ij}=\frac{r_{i}\left(  r_{i}+r_{j}\eta_{ij}\right)  }{\ell_{ij}%
}.\label{dij for fixed inversive distance}%
\end{equation}
For a path in the $r$ variables, we compute
\begin{align*}
\ell_{ij}\frac{d}{dt}\ell_{ij} &  =\left(  r_{i}+r_{j}\eta_{ij}\right)
\frac{dr_{i}}{dt}+\left(  r_{j}+r_{i}\eta_{ij}\right)  \frac{dr_{j}}{dt}\\
\frac{d}{dt}\ell_{ij} &  =d_{ij}\frac{1}{r_{i}}\frac{dr_{i}}{dt}+d_{ji}%
\frac{1}{r_{j}}\frac{dr_{j}}{dt}.
\end{align*}
Thus we see that $f_{i}=\log r_{i},$ giving the fixed inversive distance
conformal class.

\begin{definition}
For a given $\eta\in E\left(  T\right)  ^{\ast},$ the \emph{fixed inversive
distance conformal structure}, $\mathcal{C}^{FI}\left(  M,T,\eta\right)  ,$ is
the conformal structure described by the map
\[
d_{ij}=\frac{e^{f_{i}}\left(  e^{f_{i}}+e^{f_{j}}\eta_{ij}\right)  }{\ell
_{ij}\left(  f\right)  },
\]
where
\[
\ell_{ij}\left(  f\right)  =\sqrt{e^{2f_{i}}+e^{2f_{j}}+2e^{f_{i}}e^{f_{j}%
}\eta_{ij}}%
\]
is the length, when restricted to a proper domain in $V\left(  T\right)
^{\ast}$
\end{definition}

Note that there are some restrictions on the domain which may be quite
complicated, including the triangle inequality. However, it has been found
that in two dimensions, if $\eta_{ij}\geq0$ for all $\eta\in E\left(
T\right)  ^{\ast},$ then the domain is simply connected. This was initially
shown for $0\leq\eta_{ij}\leq1$ by Thurston (\cite{Thurs} \cite{MR}) and the
additional cases were proven recently by Guo \cite{Guo}.

We see that%
\begin{align*}
q_{ij}  &  =\frac{\partial d_{ij}}{\partial f_{j}}=r_{j}\frac{\partial d_{ij}%
}{\partial r_{j}}\\
&  =\frac{r_{i}^{2}r_{j}^{2}\left(  \eta_{ij}^{2}-1\right)  }{\ell_{ij}^{3}}.
\end{align*}
We finally get
\[
\frac{dK_{i}}{dt}=-\sum_{j\neq i}\left(  2\frac{\ell_{ij}^{\ast}}{\ell_{ij}%
}-\frac{r_{i}^{2}r_{j}^{2}\left(  1-\eta_{ij}^{2}\right)  }{\ell_{ij}^{4}%
}K_{ij}\right)  \left(  \frac{df_{j}}{dt}-\frac{df_{i}}{dt}\right)
+K_{i}\frac{df_{i}}{dt},
\]
and the second variation of the Einstein-Hilbert-Regge functional is%

\begin{align*}
\frac{d^{2}}{dt^{2}}\mathcal{EHR}\left(  T,\ell\left(  f\left(  t\right)
\right)  \right)   &  =\sum_{i}\sum_{j\neq i}\left(  \frac{\ell_{ij}^{\ast}%
}{\ell_{ij}}-\frac{r_{i}^{2}r_{j}^{2}\left(  1-\eta_{ij}^{2}\right)  }%
{2\ell_{ij}^{4}}K_{ij}\right)  \left(  \frac{df_{j}}{dt}-\frac{df_{i}}%
{dt}\right)  ^{2}\\
&  \;\;\;+\sum_{i}K_{i}\left[  \left(  \frac{df_{i}}{dt}\right)  ^{2}%
+\frac{d^{2}f_{i}}{dt^{2}}\right]  .
\end{align*}
Note that in the case that $\eta_{ij}=1,$ corresponding to sphere packing, the
second term is zero. In general, for spheres with intersection we have
$\eta_{ij}\leq1$ and for spheres which do not intersect we have $\eta_{ij}>1$
and so in each case the term with edge curvatures has a particular sign.

\subsection{Perpendicular bisectors\label{section perpendicular bisector}}

Here we give the conformal structure proposed by Ro\v{c}ek-Williams \cite{RW},
Luo \cite{Luo1}, and Pinkall-Schroeder-Springborn \cite{SSP}. This structure
has also been found in the numerical analysis literature on approximations of
the Laplacian in the context of the box method (see, e.g., \cite{Kerk} and
\cite{PC}). Take
\[
\ell_{ij}=e^{u_{i}+u_{j}}L_{ij}%
\]
where $L_{ij}$ are fixed lengths. We see that, given a path in the space of
$u$ variables,
\[
\frac{d}{dt}\ell_{ij}=\ell_{ij}\left(  \frac{du_{i}}{dt}+\frac{du_{j}}%
{dt}\right)  .
\]
If we take
\begin{align*}
d_{ij} &  =\frac{\ell_{ij}}{2}\\
f_{i} &  =2u_{i}%
\end{align*}
then
\[
\frac{d}{dt}\ell_{ij}=d_{ij}\frac{df_{i}}{dt}+d_{ji}\frac{df_{j}}{dt}.
\]
We notice that the duals to the edges intersect the edges at their midpoints,
which is why we call this the perpendicular bisector conformal structure
following \cite{Kerk}. It can be proven inductively that the center of any
simplex is the center of the sphere circumscribing that simplex.

\begin{definition}
Let $L\in E\left(  T\right)  $ be such that $\left(  M,T,L\right)  $ is a
piecewise flat manifold. The \emph{perpendicular bisector conformal
structure}, $\mathcal{C}^{PB}\left(  M,T,L\right)  ,$ is the conformal
structure determined by
\[
d_{ij}=\frac{1}{2}e^{\frac{1}{2}\left(  f_{i}+f_{j}\right)  }L_{ij},
\]
when restricted to an appropriate domain.
\end{definition}

Since $\left(  M,T,d\left(  \vec{0}\right)  \right)  $ is a piecewise flat,
metrized manifold, this conformal structure exists for $f_{i}$ close to $0.$
However, the largest possible domain must satisfy a number of inequalities.

We see that
\[
q_{ij}=\frac{\partial d_{ij}}{\partial f_{j}}=\frac{1}{4}\ell_{ij}.
\]
Thus, in three dimensions the variation of curvature is
\[
\frac{dK_{i}}{dt}=-\sum_{j\neq i}\left(  2\frac{\ell_{ij}^{\ast}}{\ell_{ij}%
}-\frac{1}{4}K_{ij}\right)  \left(  \frac{df_{j}}{dt}-\frac{df_{i}}%
{dt}\right)  +K_{i}\frac{df_{i}}{dt}.
\]
We get that
\begin{align*}
\frac{d^{2}}{dt^{2}}\mathcal{EHR}\left(  T,\ell\left(  f\left(  t\right)
\right)  \right)   &  =\sum_{i}\sum_{j\neq i}\left(  \frac{\ell_{ij}^{\ast}%
}{\ell_{ij}}-\frac{1}{8}K_{ij}\right)  \left(  \frac{df_{j}}{dt}-\frac{df_{i}%
}{dt}\right)  ^{2}\\
&  \;\;\;\;\;\;\;+\sum_{i}K_{i}\left[  \left(  \frac{df_{i}}{dt}\right)
^{2}+\frac{d^{2}f_{i}}{dt^{2}}\right]  .
\end{align*}

\section{The discrete Laplacian and the second
variation\label{section laplace}}

\subsection{Laplacians}

The relationship between the second variation of the functionals presented
here and the Laplacian is the main reason we describe these variations as
conformal. The standard Laplacian is defined as follows.

\begin{definition}
Let $\left(  M,T,d\right)  $ be a piecewise flat, metrized manifold. The
\emph{discrete Laplacian} $\triangle$ is an operator $V\left(  T\right)
^{\ast}\rightarrow V\left(  T\right)  ^{\ast}$ defined by
\[
\left(  \triangle\phi\right)  _{i}=\sum_{j}\frac{\ell_{ij}^{\ast}}{\ell_{ij}%
}\left(  \phi_{j}-\phi_{i}\right)
\]
for each vertex $i,$ where $\ell_{ij}^{\ast}$ is the dual length defined
appropriately (see Definitions \ref{definition 2d dual length} and
\ref{definition 3d dual length} and \cite{G3} for the general case).
\end{definition}

These can be considered Laplacians on the graph of the 1-skeleton with edges
weighted by $\frac{\ell_{ij}^{\ast}}{\ell_{ij}}.$ (For more on Laplacians on
graphs, see \cite{Chun}.) This is a very natural choice of Laplacian, arising,
for instance, by considering another function $\psi$ on the vertices, and
defining the Laplacian weakly as
\[
\sum_{i}\triangle\phi_{i}\psi_{i}=-\frac{n}{2}\sum_{i,j}\frac{\phi_{i}%
-\phi_{j}}{\ell_{ij}}\frac{\psi_{i}-\psi_{j}}{\ell_{ij}}V_{ij},
\]
for all choices of $\psi,$ where $V_{ij}$ is the volume associated to an edge,
defined by
\[
V_{ij}=\frac{1}{n}\ell_{ij}^{\ast}\ell_{ij},
\]
where $n$ is the dimension. This is an analogue of the definition of the
smooth Laplacian on a closed manifold as the operator such that
\[
\int\triangle\phi~\psi~dV=-\int\nabla\phi\cdot\nabla\psi~dV
\]
for all smooth functions $\psi.$ Another interesting observation about the
Laplacian is that the weights $\frac{\ell_{ij}^{\ast}}{\ell_{ij}}$ are very
much like conductances, in that they are inversely proportional to length and
directly proportional to cross-sectional area if one considers current through
wires located at the edges of the triangulation.

Laplacians of this geometric form have been studied for some time. The most
well-known is the \textquotedblleft cotan formula\textquotedblright\ for a
Laplacian on a planar triangulation. If one considers the perpendicular
bisector formulation of Section \ref{section perpendicular bisector} on a
planar domain or surface, one finds that $\ell_{ij}^{\ast}=\ell_{ij}\left(
\cot\gamma_{k,ij}+\cot\gamma_{\ell,ij}\right)  .$ It turns out that this is
precisely the finite element approximation of the Laplacian, as first computed
by Duffin \cite{Duf}. The cotan formula has been well-studied both in regards
to approximation of the Laplacian on domains and approximation of the
Laplacian on surfaces for computing minimal surfaces and bending energies.
See, e.g., \cite{PP} \cite{Kerk} \cite{HPW} \cite{WBHZG} \cite{BS}. In
addition, Laplacians have appeared in the study of circle packings. In fact,
to our knowledge, the first observation that variations of angles are related
to dual lengths dates to Z. He \cite{He} in the circle packing setting, where
it was used for constructing a Laplacian. Further work in two dimensions in
the setting of circle packings and circle diagrams with fixed inversive
distance which connects angle variations with Laplacians can be found in
\cite{Dub} \cite{CL} \cite{G4} \cite{Guo}. An interesting study of possible
Laplacians from a axiomatic development can be found in \cite{WMKG}.

\subsection{Properties of the Laplacian}

There are two properties of the smooth Laplacian which are desirable to have
in a discrete Laplacian:

\begin{enumerate}
\item \label{property 1 laplace}$\triangle$ is a negative semidefinite
operator with zero eigenspace corresponding exactly to constant functions
($\phi$ is a constant function if there exists $c\in\mathbb{R}$ such that
$\phi_{i}=c$ for all $i\in V$).

\item \label{property 2 laplace}$\triangle$ satisfies the weak maximum
principle, i.e., for any $\phi\in V\left(  T\right)  ^{\ast},$ if $\phi
_{m}=\min_{i}\phi_{i}$ and $\phi_{M}=\max_{i}\phi_{i}$ then $\triangle\phi
_{m}\geq0$ and $\triangle\phi_{M}\leq0.$
\end{enumerate}

Note that the definition of the Laplacian ensures that the constant functions
are in the nullspace. The second property is implied by $\ell_{ij}^{\ast}%
\geq0$ for all edges $\left\{  i,j\right\}  .$ Furthermore, we shall show that
the strict inequality $\ell_{ij}^{\ast}>0$ implies the property
\ref{property 1 laplace}. The Laplacian is a symmetric operator, and so it has
a full set of eigenvalues. If $\lambda$ is an eigenvalue with eigenvector
$\phi,$ then
\[
\lambda\phi_{i}=\sum_{j}\frac{\ell_{ij}^{\ast}}{\ell_{ij}}\left(  \phi
_{j}-\phi_{i}\right)  .
\]
We see that
\[
\lambda\sum_{i}\phi_{i}^{2}=-\frac{1}{2}\sum_{i,j}\frac{\ell_{ij}^{\ast}}%
{\ell_{ij}}\left(  \phi_{j}-\phi_{i}\right)  ^{2}%
\]
and so we see immediately that if $\ell_{ij}^{\ast}\geq0,$ then $\lambda
\leq0.$ Furthermore, if the inequality is strict, then $\lambda=0$ implies
that $\phi_{i}=\phi_{j}$ for every edge. On a connected manifold, this implies
that $\phi$ is constant. This type of Laplacian has good numerical properties
and for this reason numerical analysts are often interested in using such a
Laplacian for numerical approximation of PDE. (For instance, see \cite{Kerk}.) 

In two dimensions, the property $\ell_{ij}^{\ast}\geq0$ is a weighted Delaunay
condition \cite{G3}. Note that the argument in the previous paragraph shows
that this condition implies that the Laplacian is negative semidefinite, but
it may have a larger nullspace than just the constant functions. Often in
triangulations of the plane, one gets around the fact that the inequality is
not strict by removing edges with the property that $\ell_{ij}^{\ast}=0$ and
replacing the triangulation with a polygonalization. In the manifold case,
this could potentially introduce curvature to the inside of the polygons, so
we do not pursue this direction. It is not known whether a given piecewise
flat, metrized manifold $\left(  M^{2},T,d\right)  $ can be transformed to
another piecewise flat, metrized manifold $\left(  M^{2},T^{\prime},d^{\prime
}\right)  $ which is weighted Delaunay such that the two induced piecewise
flat manifolds are isometric in a reasonable sense. This is true for the
perpendicular bisector conformal structure, which corresponds to finding
Delaunay triangulations (see \cite{Riv1} and \cite{BS}). In three dimensions,
the property $\ell_{ij}^{\ast}\geq0$ is not equivalent to a weighted Delaunay
condition, and much less is known about the existence of such metrics.
However, the geometric description of the Laplacian ensures that if all the
centers of the highest dimensional simplices are inside those simplices, then
the second property is satisfied (some call this property \textquotedblleft
well-centered,\textquotedblright\ see \cite{DHLM}).

The first property is certainly weaker. There are a number of instances when
one can prove the first property without the second property being true. For
instance, for a metric in a two-dimensional perpendicular bisector conformal
structure, we see that the induced Laplacian is precisely the finite element
Laplacian. This Laplacian always satisfies the first property, but only
satisfies the second if it is Delaunay (see \cite{Riv1} for a proof). We state
a proposition summarizing the known conditions which ensure the first
property. The following proposition is an amalgam of known results.

\begin{proposition}
\label{proposition laplacian definiteness}Let $\left(  M^{n},T,d\right)  $ be
a piecewise flat, metrized manifold. The discrete Laplacian is a negative
semidefinite operator with zero eigenspace corresponding exactly to constant
functions if any of the following are satisfied:

\begin{enumerate}
\item \label{prop label a}$\ell_{ij}^{\ast}>0$ for all edges $\left\{
i,j\right\}  \in E\left(  T\right)  .$

\item \label{prop label c}$n=2$ and the triangulation is in $\mathcal{C}%
^{FI}\left(  M,T,\eta\right)  ,$ a fixed inversive distance conformal
structure, with $\eta_{ij}\geq0$ for all $\left\{  i,j\right\}  \in E\left(
T\right)  .$

\item \label{prop label d}$n=2$ and $d_{ij}>0$ for all $\left(  i,j\right)
\in E_{+}\left(  T\right)  .$

\item \label{prop label e}$n=2$ and $\left(  M,T,d\right)  $ is $\mathcal{C}%
^{PB}\left(  M,T,L\right)  ,$ a perpendicular bisector conformal structure,
for some $L$.

\item \label{prop label f}$n=2$ and for each triangle isometrically embedded
in the plane as $v_{i}v_{j}v_{k},$ the center $c_{ijk}$ is contained within
the circumcircle.

\item \label{prop label g}$n=3$ and $\left(  M,T,d\right)  $ is in
$\mathcal{C}^{P}\left(  M,T\right)  ,$ the sphere packing conformal structure.
\end{enumerate}
\end{proposition}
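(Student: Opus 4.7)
The plan is to derive all six sufficient conditions from a single Dirichlet identity
\begin{equation*}
\sum_i \psi_i(\Delta\phi)_i = -\tfrac{1}{2}\sum_{i,j}\frac{\ell_{ij}^*}{\ell_{ij}}(\phi_i-\phi_j)(\psi_i-\psi_j),
\end{equation*}
where the sum runs over ordered pairs of vertices sharing an edge. This follows by substituting the definition of $\Delta$, swapping the roles of $i$ and $j$ using $\ell_{ij}^*=\ell_{ji}^*$, and averaging; it simultaneously shows that $\Delta$ is symmetric (hence diagonalizable) and exhibits its quadratic form as a signed weighted sum of squares. Condition (\ref{prop label a}) is then immediate: strict positivity of all $\ell_{ij}^*/\ell_{ij}$ makes the form $\le 0$, and vanishing forces $\phi_i=\phi_j$ on every edge, whence $\phi$ is constant by connectedness of $M$.

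For the two-dimensional hypotheses (\ref{prop label c})--(\ref{prop label f}), I would decompose the Dirichlet sum triangle-by-triangle via $\ell_{ij}^* = h_{ij,k} + h_{ij,\ell}$, reducing the claim to showing that the per-triangle form
\begin{equation*}
Q_{ijk}(\phi) = \tfrac{1}{2}\!\left[\tfrac{h_{ij,k}}{\ell_{ij}}(\phi_i-\phi_j)^2 + \tfrac{h_{jk,i}}{\ell_{jk}}(\phi_j-\phi_k)^2 + \tfrac{h_{ki,j}}{\ell_{ki}}(\phi_k-\phi_i)^2\right]
\end{equation*}
is nonnegative with kernel consisting of functions constant on $\{i,j,k\}$, then gluing globally using connectedness of the 1-skeleton. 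For (\ref{prop label e}) the perpendicular bisector structure identifies $c_{ijk}$ with the circumcenter and reduces $Q_{ijk}$ to Duffin's cotangent form, known to be positive semidefinite with the correct kernel. For (\ref{prop label f}) the circumcircle hypothesis, combined with the explicit formula $h_{ij,k}=(d_{ik}-d_{ij}\cos\gamma_{i,jk})/\sin\gamma_{i,jk}$ from Section 2, yields nonnegativity of both the trace and determinant of $Q_{ijk}$ in any basis of two edge differences. Cases (\ref{prop label c}) and (\ref{prop label d}) I would either reduce to (\ref{prop label f}) by checking that $\eta_{ij}\ge 0$ and $d_{ij}>0$ each force the triangle center into the circumcircle, or by invoking directly the Thurston-Marden-Rodin-Guo positivity results for inversive-distance packings.

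For the three-dimensional sphere-packing case (\ref{prop label g}), I would follow Cooper-Rivin and \cite{G1}: on a single tetrahedron $\{i,j,k,\ell\}$ with $d_{ij}=r_i$, Theorem \ref{3D} yields $r_i r_j\,\partial\alpha_i/\partial r_j = 2A_{ij,k\ell}/\ell_{ij}$, so the tetrahedral contribution to the Dirichlet form assembles directly from the dual areas, and the explicit computation in \cite{G1} shows negative semidefiniteness with one-dimensional kernel spanned by $(r_1,r_2,r_3,r_4)$; assembly over tetrahedra and connectedness close the argument. The main obstacle throughout is the gluing step in the two-dimensional cases where individual $Q_{ijk}$'s are only positive semidefinite with kernels larger than the constants: one must show that any global function lying in every local kernel is necessarily constant, which requires identifying the degeneracy directions of each $Q_{ijk}$ concretely and showing that agreement across shared edges in a connected triangulation forces constancy.
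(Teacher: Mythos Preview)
Your proposal is essentially correct and matches the paper's approach: the paper's own proof is little more than a citation list pointing to \cite{Thurs}, \cite{MR}, \cite{Guo}, \cite{G3}, \cite{Riv1}, \cite{G4}, \cite{CR}, \cite{Riv2}, \cite{G2}, together with the Dirichlet-form argument for case~(\ref{prop label a}) that you also give, and it explicitly notes that cases (\ref{prop label c})--(\ref{prop label g}) are all handled by per-simplex positivity plus gluing. Your sketch simply unpacks what is inside those references.

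Two small points. First, the paper explicitly does \emph{not} verify that (\ref{prop label d}) reduces to (\ref{prop label f}); it says ``We believe (\ref{prop label d}) implies (\ref{prop label f}), though we have not verified the proof,'' and instead cites a direct argument from \cite{G3} for (\ref{prop label d}). So your fallback of invoking the direct positivity result is the safer route there. Second, in case~(\ref{prop label g}) the per-tetrahedron kernel you describe as spanned by $(r_1,r_2,r_3,r_4)$ is correct in the $r$-coordinates of \cite{CR} and \cite{G1}; in the $f_i=\log r_i$ coordinates relevant to the Laplacian as defined here, this is exactly the constant functions, so the gluing step goes through without the complication you flag at the end. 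In fact your ``main obstacle'' is not a real one in any of the listed cases: for nondegenerate simplices under each hypothesis the local kernel is precisely the constants on that simplex, so connectedness of the $1$-skeleton finishes the argument immediately.
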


\begin{proof}
The proofs follow from a number of results from the literature. The fact that
(\ref{prop label a}) implies definiteness is well known in the numerical
analysis community and proven in the discussion before the statement of the
proposition. The fact that (\ref{prop label c}) implies definiteness was
proven for $0\leq\eta_{ij}\leq1$ by Thurston \cite{Thurs} and Marden-Rodin
\cite{MR} and the general case of (\ref{prop label c}) was proven by Guo
\cite{Guo}. In fact, using (\ref{dij for fixed inversive distance}), one
easily sees that (\ref{prop label c}) implies (\ref{prop label d}), and the
fact that (\ref{prop label d}) implies definiteness is in \cite{G3}. We
believe (\ref{prop label d}) implies (\ref{prop label f}), though we have not
verified the proof since there is a direct proof for (\ref{prop label d}).
(\ref{prop label e}) implies definiteness was shown by Rivin \cite{Riv1}.
Also, for (\ref{prop label e}), the center is the circumcenter and thus
(\ref{prop label e}) implies (\ref{prop label f}). The fact that
(\ref{prop label f}) implies definiteness is in \cite{G4}. The fact that
(\ref{prop label f}) implies definiteness follows easily from the definiteness
of the related matrix in the Appendix from \cite{G2} (also in \cite{CR} and
\cite{Riv2}).
\end{proof}

Note that Proposition \ref{proposition laplacian definiteness} only covers a
small subset of the cases one might be interested in. It is of interest that
(\ref{prop label c})-(\ref{prop label g}) are all proven by proving the
definiteness on a single simplex and then extrapolating to the entire complex,
though (\ref{prop label a}) and takes the global structure into account. In
light of (\ref{prop label e}) and (\ref{prop label f}), it may be surprising
that the same are not true, in general, for $n=3$ (one can consider tetrahedra
which are close to flat). It would be of interest to know a condition similar
to (\ref{prop label g}) which implies definiteness for $n=3.$

\subsection{Convexity and rigidity of curvature functionals}

We can use our analysis of the Laplacian to attack two questions about
curvature functionals:

\begin{enumerate}
\item[Q1.] Are the functionals convex?

\item[Q2.] Are critical points rigid?
\end{enumerate}

The first question is more difficult, but if we take first order variations of
$f_{i}$ in two dimensions (i.e., $\frac{d^{2}f_{i}}{dt^{2}}=0$), then we have
the following theorem.

\begin{theorem}
[\cite{Thurs}\cite{MR}\cite{Guo}\cite{Luo1}]\label{theorem 2d convexity}The
function $F$ described in Theorem \ref{2d variational} is convex on the image
of following conformal structures:

\begin{enumerate}
\item $\mathcal{C}^{FI}\left(  M^{2},T,\eta\right)  $, with $\eta_{ij}\geq0$
for all $\left\{  i,j\right\}  \in E\left(  T\right)  .$

\item $\mathcal{C}^{PB}\left(  M^{2},T,L\right)  $, for some $L$.
\end{enumerate}
\end{theorem}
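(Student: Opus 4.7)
The plan is to identify the Hessian of $F$ (in the conformal coordinates $f$) with the negative of the discrete Laplacian and then invoke Proposition \ref{proposition laplacian definiteness}. Since Theorem \ref{2d variational} gives $\partial F/\partial f_i = K_i$, it suffices to understand $\partial K_i/\partial f_j$, and the variation formula (\ref{2d functional deriv laplace}) does exactly this for us; nothing further about the specific parametrization $d_{ij}(f)$ is needed beyond what Proposition \ref{proposition laplacian definiteness} already encodes.

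Concretely, I would fix a base point $f_0 \in U$ in the domain of the chosen conformal structure and an arbitrary direction $v \in V(T)^\ast$, then examine the straight-line variation $f(t) = f_0 + tv$, which is a conformal variation for $|t|$ small. Along this line $\frac{d^2 f_i}{dt^2} = 0$, so the second-variation formula (\ref{2d functional deriv laplace}) collapses to
\[
\frac{d^{2}F}{dt^{2}}\bigg|_{t=0} \;=\; \frac{1}{2}\sum_{i,j}\frac{\ell_{ij}^{\ast}}{\ell_{ij}}\bigl(v_{j}-v_{i}\bigr)^{2}.
\]
This quadratic form in $v$ is, up to sign, the Dirichlet form associated to the discrete Laplacian at the metric $d=\mathcal{C}(f_0)$; equivalently, it equals $-\langle v,\triangle v\rangle$ with respect to the standard pairing on $V(T)^\ast$. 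Thus positivity of the Hessian of $F$ at $f_0$ is precisely the statement that $\triangle$ is negative semidefinite at $d$.

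At this point the proof is finished by quoting Proposition \ref{proposition laplacian definiteness}: part (\ref{prop label c}) handles $\mathcal{C}^{FI}(M^2,T,\eta)$ with $\eta_{ij}\geq 0$, and part (\ref{prop label e}) handles $\mathcal{C}^{PB}(M^2,T,L)$. In both cases $\triangle$ is negative semidefinite (with only constants in the kernel), so the Hessian of $F$ is positive semidefinite throughout the domain. Convexity of $F$ along any line segment contained in the domain is then immediate from the fundamental theorem of calculus applied to $\frac{d}{dt}F(f_0+tv)$.

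The only real obstacle is the global one: we must know that the domain of the conformal structure is connected and geodesically convex (as a subset of $V(T)^\ast$) in order to promote pointwise positive semidefiniteness of the Hessian into convexity of $F$ as a function on the whole image. For $\mathcal{C}^{FI}$ with $\eta_{ij}\geq 0$ this is exactly the content of the Thurston--Marden--Rodin result in the classical range $0\le\eta_{ij}\le 1$ and of Guo's extension to the remaining $\eta_{ij}\ge 1$ regime; for $\mathcal{C}^{PB}$ the analogous statement follows from the corresponding results for Delaunay triangulations used in Rivin's and Luo's work. Once these domain statements are quoted, the second variation computation above delivers the theorem.
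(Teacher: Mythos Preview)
Your proposal is correct and follows essentially the same route as the paper: invoke the second-variation formula from Theorem \ref{2d variational} (with a linear variation so the $\frac{d^2 f_i}{dt^2}$ term drops), recognize the resulting quadratic form as $-\langle v,\triangle v\rangle$, and then apply Proposition \ref{proposition laplacian definiteness} parts (\ref{prop label c}) and (\ref{prop label e}). Your final paragraph on convexity of the domain $U$ is a point the paper does not explicitly address in its one-line proof, so you are being more careful there than the original.
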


\begin{proof}
The proof follows immediately from Theorem \ref{2d variational} and
Proposition \ref{proposition laplacian definiteness}. This theorem was
previously proven by combining theorems of the articles listed.
\end{proof}

In three dimensions, this question is far more complex, much like in the
smooth case, due to the presence of a reaction term. However, we do have the
following result.

\begin{theorem}
\label{theorem 3d convexity}The Einstein-Hilbert-Regge functional is convex on
the following sets:

\begin{enumerate}
\item Metrics in the image of the conformal structure $\mathcal{C}^{P}\left(
M^{3},T\right)  $ with $K_{i}\geq0$ for all $i\in V\left(  T\right)  .$

\item Metrics in the image of any conformal structure of $\left(
M^{3},T\right)  $ which satisfy $\ell_{ij}^{\ast}-\frac{1}{2}q_{ij}K_{ij}>0$
for each $\left\{  i,j\right\}  \in E\left(  T\right)  $ and $K_{i}\geq0$ for
all $i\in V\left(  T\right)  .$
\end{enumerate}
\end{theorem}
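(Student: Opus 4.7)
The plan is to show that the Hessian of $\mathcal{EHR}$, viewed as a function on $V(T)^{\ast}$ through the chosen conformal structure, is positive semidefinite. Since this Hessian applied to a direction $v$ is the second variation along the straight line $f(t)=\bar f+tv$, for which $\frac{d^{2}f_{i}}{dt^{2}}=0$, formula (\ref{EHR second var gen}) of Theorem \ref{theorem conformal 3d} collapses to
\[
\frac{d^{2}}{dt^{2}}\mathcal{EHR}=\sum_{i}\sum_{j\neq i}\left(\frac{\ell_{ij}^{\ast}}{\ell_{ij}}-\frac{q_{ij}}{2\ell_{ij}}K_{ij}\right)(v_{j}-v_{i})^{2}+\sum_{i}K_{i}\,v_{i}^{2}.
\]
It suffices to verify that this quadratic form in $v$ is nonnegative under each of the stated hypotheses.

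Case~2 is essentially immediate: the hypothesis $\ell_{ij}^{\ast}-\tfrac{1}{2}q_{ij}K_{ij}>0$ makes every coefficient of the first sum strictly positive, and the hypothesis $K_{i}\geq 0$ makes the reaction term nonnegative, so the Hessian is a nonnegative linear combination of squares.

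For Case~1, the key preliminary observation is that in the sphere packing conformal structure $\mathcal{C}^{P}(M,T)$ one has $d_{ij}=e^{f_{i}}$, hence $q_{ij}=\partial d_{ij}/\partial f_{j}=0$. The cross term in the Hessian then vanishes, leaving
\[
\frac{d^{2}}{dt^{2}}\mathcal{EHR}=\sum_{i}\sum_{j\neq i}\frac{\ell_{ij}^{\ast}}{\ell_{ij}}(v_{j}-v_{i})^{2}+\sum_{i}K_{i}\,v_{i}^{2}.
\]
Using the symmetry $\ell_{ij}^{\ast}=\ell_{ji}^{\ast}$, the first double sum equals $-2\langle\triangle v,v\rangle$ for the discrete Laplacian of Section \ref{section laplace}. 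Proposition \ref{proposition laplacian definiteness}(\ref{prop label g}) asserts that $\triangle$ is negative semidefinite in the sphere packing setting, so this term is $\geq 0$; combined with $K_{i}\geq 0$, this gives convexity.

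The only nontrivial external input is Proposition \ref{proposition laplacian definiteness}(\ref{prop label g}) for Case~1 and the pointwise positivity hypothesis for Case~2; everything else is bookkeeping on (\ref{EHR second var gen}) together with the explicit form of $d_{ij}$ in $\mathcal{C}^{P}$. The one subtle point is recognizing that convexity must be tested along linear paths in $f$, so that the sign-indefinite term $\sum_{i}K_{i}\frac{d^{2}f_{i}}{dt^{2}}$ in (\ref{EHR second var gen}) drops out; along nonlinear paths that reaction piece would obstruct a direct argument.
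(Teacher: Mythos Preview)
Your proposal is correct and follows essentially the same approach as the paper: use the second variation formula (\ref{EHR second var gen}) along linear paths so that the $\frac{d^{2}f_{i}}{dt^{2}}$ term drops out, then in Case~1 invoke $q_{ij}=0$ for $\mathcal{C}^{P}$ together with Proposition~\ref{proposition laplacian definiteness}(\ref{prop label g}), and in Case~2 observe directly that all coefficients are nonnegative (the paper phrases this last step as ``the same argument used to prove Proposition~\ref{proposition laplacian definiteness}(\ref{prop label a})'').
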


We note that (1) is not a special case of (2). In case (1) we have that
$q_{ij}=0$ but do not require $\ell_{ij}^{\ast}>0$. We also note that a
special case of (2) is a metric in the image of $\mathcal{C}^{FI}\left(
M^{2},T,\eta\right)  $ with $1\leq\eta_{ij}$ and $K_{ij}\geq0$ for all
$\left\{  i,j\right\}  \in E\left(  T\right)  .$

\begin{proof}
Recall the variation formula from Theorem \ref{theorem conformal 3d}. As
already remarked, in case (1) we have $q_{ij}=0.$ Together with case
(\ref{prop label g}) in Proposition \ref{proposition laplacian definiteness},
the case is proven. Case (2) can be proven by essentially the same argument
used to prove Proposition \ref{proposition laplacian definiteness}, part
(\ref{prop label a}).
\end{proof}

The second question above asks about rigidity, which we can define thus.

\begin{definition}
A piecewise flat, metrized manifold $\left(  M,T,d\right)  $ is \emph{rigid}
with respect to conformal variations if there is no conformal variation
$f\left(  t\right)  $ such that $\left(  M,T,d\left(  f\left(  t\right)
\right)  \right)  $ is fixed other than the trivial variation which scales the
edge lengths uniformly (in Riemannian geometry, this is called a homothety).
\end{definition}

Since we have functionals of $\left(  M,T,d\right)  $ in two and three
dimensions, we have the following immediate consequences of Theorems
\ref{2d variational} and \ref{theorem conformal 3d} together with Proposition
\ref{proposition laplacian definiteness}.

\begin{theorem}
\label{theorem 2d rigidity}A two-dimensional piecewise flat, metrized manifold
$\left(  M^{2},T,d\right)  $ with curvature zero (i.e., $K_{i}=0$ for all
$i\in V\left(  T\right)  $) is rigid with respect to any conformal variations
if it satisfies (\ref{prop label a})-(\ref{prop label f}) in Proposition
\ref{proposition laplacian definiteness}.
\end{theorem}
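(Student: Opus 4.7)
The plan is to reduce the rigidity claim to the nullspace characterization of the discrete Laplacian supplied by Proposition \ref{proposition laplacian definiteness}, using the infinitesimal curvature formula of Theorem \ref{curv derivative 2D} (equivalently, the second variation formula in Theorem \ref{2d variational}, which identifies the Hessian of $F$ with $-\triangle$ at a zero-curvature metric).

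I would first interpret the phrase ``$(M,T,d(f(t)))$ is fixed'' in the definition of rigidity, in the context of this theorem, as meaning that $K_i(f(t))\equiv 0$ along the variation, so that the conclusion is: the infinitesimal kernel of the curvature map $f\mapsto (K_i)_{i\in V}$ at a zero-curvature base metric consists only of uniform homotheties. Given any conformal variation $f(t)$ preserving zero curvature, I would differentiate $K_i(f(t))=0$ in $t$ and invoke Theorem \ref{curv derivative 2D} to obtain
$$0=\frac{dK_i}{dt}=-(\triangle\dot{f})_i$$
at every vertex $i$ and every $t$. Thus $\dot{f}(t)$ lies in the nullspace of the discrete Laplacian of $(M,T,d(f(t)))$ for each $t$ in the interval of definition.

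By any of the hypotheses (\ref{prop label a})-(\ref{prop label f}) of Proposition \ref{proposition laplacian definiteness}, this nullspace consists exactly of the constant functions. Hence there is some scalar $c(t)\in\mathbb{R}$ with $\dot{f}_i(t)=c(t)$ uniformly in $i$. The chain-rule proposition for conformal variations (the Proposition following Definition \ref{definition met}) then gives
$$\frac{d\ell_{ij}}{dt}=d_{ij}\dot{f}_i+d_{ji}\dot{f}_j=c(t)(d_{ij}+d_{ji})=c(t)\ell_{ij},$$
so every edge length rescales at the common rate $c(t)$. This is precisely the infinitesimal form of a uniform homothety, so integrating in $t$ yields that $f(t)$ differs from $f(0)$ by a time-dependent scalar constant, which is the trivial conformal variation.

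The main obstacle is verifying that Proposition \ref{proposition laplacian definiteness} continues to apply along the variation, not only at $t=0$. Conditions (\ref{prop label a}), (\ref{prop label d}), and (\ref{prop label f}) are open conditions on the metric, so they persist for $|t|$ sufficiently small; conditions (\ref{prop label c}) and (\ref{prop label e}) are properties of the conformal structure itself, hence automatically preserved along any conformal variation within that structure. Consequently the spectral hypothesis survives along the variation, and the pointwise-in-$t$ argument above closes. The substantive content is entirely concentrated in the two results being cited; the rest is bookkeeping.
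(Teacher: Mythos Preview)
Your proof is correct and follows essentially the same route as the paper, which simply remarks that the result is an immediate consequence of Theorem~\ref{2d variational} and Proposition~\ref{proposition laplacian definiteness}: you have spelled out in detail that the Jacobian of the curvature map (equivalently the Hessian of $F$ at a zero-curvature metric) is $-\triangle$, whose nullspace under the stated hypotheses consists only of constants, and you correctly identify the constant direction with uniform rescaling of all edge lengths. Your extra care about the definiteness hypotheses persisting along the variation is a nice point not made explicit in the paper.
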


\begin{theorem}
\label{theorem 3d rigidity}A three-dimensional piecewise flat, metrized
manifold $\left(  M^{3},T,d\right)  $ which is Ricci flat is rigid with
respect to any conformal variations if it satisfies (\ref{prop label a}) or
(\ref{prop label g}) in Proposition \ref{proposition laplacian definiteness}.
\end{theorem}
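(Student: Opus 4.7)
The plan is to combine the second variation formula (\ref{EHR second var ricci flat}) at a Ricci flat metric with the definiteness properties of the discrete Laplacian established in Proposition \ref{proposition laplacian definiteness}. The strategy is that Ricci flatness makes the second variation into a pure Laplacian quadratic form, and the hypotheses on $\ell_{ij}^*$ or on the sphere packing structure force its nullspace to consist of constant functions, which correspond exactly to uniform rescaling of $\ell$.

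First I would observe that Ricci flatness ($K_{ij}=0$ for every edge) automatically implies $K_i=0$ at every vertex, because by Definitions \ref{definition 3d scalar curvature} and \ref{definition 3d edge curvature} we have $K_i=\sum_j \frac{K_{ij}}{\ell_{ij}} d_{ij}$. Consequently $\mathcal{EHR}(M,T,\ell)=\sum_i K_i = 0$, and moreover the first variation formula (\ref{EHR first var}) shows that the Ricci flat metric is a critical point of $\mathcal{EHR}$ along every conformal direction.

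Next, suppose $f(t)$ is a conformal variation whose image remains Ricci flat. Then $K_{ij}(t)=0$ for all $t$, so $\mathcal{EHR}(t)\equiv 0$, and in particular $\frac{d^2}{dt^2}\mathcal{EHR}\big|_{t=0}=0$. Since we are at a Ricci flat critical metric, the formula (\ref{EHR second var ricci flat}) of Theorem \ref{theorem conformal 3d} reduces this to
\begin{equation*}
0 \;=\; \sum_i \sum_{j\neq i} \frac{\ell_{ij}^*}{\ell_{ij}} \left(\frac{df_j}{dt} - \frac{df_i}{dt}\right)^2.
\end{equation*}
The right-hand side is, up to a factor of $-2$, the Dirichlet form $-\langle \dot f, \triangle \dot f\rangle$ associated to the discrete Laplacian of Section \ref{section laplace}. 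Under hypothesis (\ref{prop label a}) of Proposition \ref{proposition laplacian definiteness} we have $\ell_{ij}^*>0$ on every edge, which forces $\dot f_j=\dot f_i$ across each edge and hence $\dot f$ is constant on the (connected) vertex set. Under hypothesis (\ref{prop label g}), the same proposition tells us directly that the associated Laplacian is negative semidefinite with nullspace equal to the constants, so again $\dot f_i\equiv c$.

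Finally, I would observe that a conformal variation with $\dot f_i\equiv c$ is precisely a homothety: by the proposition just before Theorem \ref{2d variational}, $\frac{d}{dt}\ell_{ij}=c(d_{ij}+d_{ji})=c\,\ell_{ij}$, uniform scaling of every edge length. Hence any conformal variation preserving Ricci flatness must be the trivial scaling variation, proving rigidity. The main potential obstacle is a conceptual one rather than a computational one: one must ensure that the quadratic form appearing in (\ref{EHR second var ricci flat}) is exactly the Dirichlet form associated to the Laplacian in Proposition \ref{proposition laplacian definiteness} so that the nullspace characterization transfers verbatim; once that identification is made, the argument is essentially immediate from the tools already assembled.
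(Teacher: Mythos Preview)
Your proposal is correct and follows essentially the same approach as the paper, which simply declares the result an ``immediate consequence'' of Theorem~\ref{theorem conformal 3d} together with Proposition~\ref{proposition laplacian definiteness}. You have supplied precisely the details the paper leaves implicit: Ricci flatness kills both the $K_i$ and $K_{ij}$ terms so that the second variation (\ref{EHR second var ricci flat}) reduces to the Dirichlet form of the discrete Laplacian, and the definiteness hypotheses then force $\dot f$ to be constant, i.e., a homothety.
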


Note that these statements are analogous to a theorem of Obata \cite{Obat} in
the smooth category.

\begin{acknowledgement}
This work benefited from discussions with Mauro Carfora, Dan Champion, and
Feng Luo.
\end{acknowledgement}

\end{document}